\numberwithin{equation}{section}
\newtheorem{thm}{Theorem}[section]
\newtheorem{lem}{Lemma}[section]
\newtheorem{rem}{Remark}[section]
\newtheorem{defn}{Definition}[section]
\newcommand{\ed}{\end {document}}
\newcommand{\wi}{\widehat}
\begin{document}

\title[F-modulated stability]{An F-modulated stability framework for multistep methods}

\author[D. Li]{ Dong Li}
\address{D. Li, SUSTech International Center for Mathematics, and Department of Mathematics, 
Southern University of Science and Technology, Shenzhen, China 518055, PRC}
\email{lid@sustech.edu.cn}

\author[C.Y. Quan]{Chaoyu Quan}	
\address{C.Y. Quan, SUSTech International Center for Mathematics, and Department of Mathematics, Southern University of Science and Technology,
	Shenzhen, P.R. China}
\email{quancy@sustech.edu.cn}

\author[W. Yang]{Wen Yang}
\address{\noindent W. Yang,~Wuhan Institute of Physics and Mathematics, Innovation Academy for Precision Measurement Science and Technology, Chinese Academy of Sciences, Wuhan 430071, P. R. China.}
\email{wyang@wipm.ac.cn}

\begin{abstract}
We introduce a new $\mathbf F$-modulated energy stability framework for general linear multistep
methods. We showcase the theory for the two dimensional molecular beam epitaxy model
with no slope selection which is a prototypical gradient flow with Lipschitz-bounded nonlinearity.
We employ a class of representative BDF$k$, $2\le k \le 5$ discretization schemes with explicit
$k^{\mathrm{th}}$-order extrapolation of the nonlinear term.  We prove the uniform-in-time boundedness of high Sobolev norms of the numerical
solution. The upper bound is unconditional, i.e. regardless of the size of the time step. We develop a new algebraic theory and calibrate nearly optimal and \emph{explicit}
maximal time step constraints which guarantee monotonic $\mathbf F$-modulated energy dissipation.
\end{abstract}

\maketitle

\section{Introduction}
Phase field models such as 
the Allen--Cahn (AC) equations \cite{allen1979microscopic}, the Cahn--Hilliard (CH) equations \cite{cahn1958free}, and the molecular beam epitaxy (MBE) models
\cite{clarke1987origin} have been widely used in material sciences, multiphase flow, biology, image processing and the like.  In this work for convenience of presentation we consider a prototypical
MBE model with no slope selection (MBE-NSS) posed on the two-dimensional periodic torus $\Omega =
\mathbb T^2 = [-\pi, \pi]^2$:
\begin{align} \label{1.1}
\partial_t u = - \nu \Delta^2 u - \nabla \cdot \Bigl( f ( \nabla u ) \Bigr),
\qquad (t,x) \in (0,\infty) \times \Omega,
\end{align}
where $f(z) = \frac {z}{1+|z|^2}$,  and $|z|=\sqrt{z_1^2+z_2^2}$ for $z =(z_1,z_2)^{\mathrm{T}}\in \mathbb R^2$. The real-valued function $u=u(t,x)$ is called
a scaled height function of the thin film in a co-moving frame.  The linear dissipative term  $-\nu \Delta^2 u$ represents to capillarity-driven isotropic surface diffusion (cf. Mullins \cite{m1957} and Herring \cite{h1951}) with $\nu>0$ being the diffusion coefficient.
 The dynamical evolution \eqref{1.1} can be derived from the $L^2$ gradient flow  of the energy functional
\begin{align} \label{1.3}
\mathcal E (u)= \int_{\Omega}
\Bigl( -\frac 12 \log(1+|\nabla u|^2) + \frac 12 \nu | \Delta u |^2 \Bigr) dx.
\end{align}
One should note that the sign in front of the logarithmic potential is negative which
is a manifestation of  the Ehrlich-Schwoebel effect.
Due to the strong competition between the potential term and the biharmonic diffusion
term,  an uphill atom current is often generated in the system which leads to
mound-like structures in the film. Under the assumption $|\nabla u |\ll 1$,  the energy
functional \eqref{1.3} can be roughly approximated by  a simpler-looking functional
\begin{align} \label{1.3a}
\mathcal E^{\mathrm{SS}} (u) = \int_{\Omega}
\Bigl( \frac 14 (|\nabla u|^2-1)^2 + \frac 12 \eta^2 |\Delta u|^2 \Bigr)dx.
\end{align}
The $L^2$-gradient flow of $\mathcal E^{\mathrm{SS}}$ corresponds to the  MBE model with
slope-selection, i.e. the system typically favors the slope $|\nabla u | \approx 1$ which
exhibits pyramidal structures. In stark contrast prototypical solutions to \eqref{1.1} have mound-like
structures whose slopes can have a large upper bound. On the other hand from the analysis point
of view, the system \eqref{1.1} is more benign than \eqref{1.3a} since the nonlinear term
has bounded derivatives of all orders. This is of fundamental importance since it leads to strong
a priori bound of the PDE solution uniformly in time. For smooth solutions to \eqref{1.1},  the mean-value of $u$ is preserved in time. For simplicity we set this mean value to be zero throughout
this work. The fundamental energy conservation law takes the form
\begin{align}
\mathcal E( u(t_2) ) + \int_{t_1}^{t_2} \| \partial_t u \|_2^2 dt = \mathcal
E (u(t_1) ), \qquad\forall\, 0\le t_1 <t_2 <\infty.
\end{align}
This yields
\begin{align} \label{1.6}
\mathcal E (u(t_2) ) \le \mathcal E (u(t_1) ), \qquad\forall\, t_2\ge t_1.
\end{align}
Since the mean value of $u$ is zero and the energy is coercive,  the estimate
\eqref{1.6} gives a priori global $H^2$ control of the solution. The wellposedness
and regularity of solutions to \eqref{1.1} follows  easily from this  and the fact that
the nonlinear term has bounded derivatives of all orders.

In the study of phase field models a fundamental problem is
 to design efficient, accurate and stable  numerical schemes which can accommodate vastly different spatial and temporal scales. A sampler of existing popular numerical methods includes  the convex-splitting scheme \cite{chen2012linear,eyre1998unconditionally,wang2010unconditionally}, the stabilization scheme \cite{shen2010numerical,xu2006stability}, the scalar auxiliary variable (SAV) scheme \cite{shen2018scalar}, semi-implicit/implicit-explicit (IMEX) schemes \cite{Li2021, LQT21, LTaam21} and so on.
Recently a new theoretical framework has
been established (\cite{Li2021, LQT21, LTaam21}) to analyze the stability and convergence 
of typical semi-implicit methods of order up to two.  However due to the lack of monotonic discrete
energy law there are very  few works in the literature devoted to the analysis of stability of higher order methods. In practical numerical implementations it is often observed that the energy of higher order methods typically
exhibits sporadic non-monotonic oscillations for medium time step sizes. As such it was already 
realized and heuristically argued in \cite{xu2006stability} that high order methods should dissipate
a suitably modified energy functional $\mathcal E_{\Delta t}$ which differs from the standard energy by
a minuscule $O(\Delta t^p)$ correction. In \cite{LYZ20}, by an ingenious cut-off procedure which caps 
the numerical solution with the PDE maximum principle, Li, Yang and Zhou proved rigorously
the $L^2$ stability of a class of high order methods for parabolic equations.  Concerning the
molecular beam epitaxy model with no slope selection,  Hao, Huang and Wang \cite{hao2020third} recently 
proposed a BDF3/AB3 scheme with a judiciously chosen additional stabilization term 
of the form $ - A \Delta t^2 \Delta^2 (u^{n+1}-u^n)$. It was rigorously shown in 
\cite{hao2020third} that if $A\ge O(\nu^{-1})$ then one can achieve unconditional energy
stability regardless of the size of the time step. In recent \cite{li2021bdf3},  a BDF3/EP3 
semi-discretization scheme was analyzed for the MBE model with no slope selection. 
Explicit and nearly optimal time step constraints were identified in \cite{li2021bdf3} for 
which the modified energy dissipation law is rigorously proved to hold. Moreover an unconditional
uniform energy bound was proved in \cite{li2021bdf3} with no size restrictions on the time step. 
However, whilst the analysis framework in \cite{li2021bdf3} is quite satisfactory for the BDFk methods of order less than three, it is by no means trivial to extend it to higher order methods such as
 BDFk, $k\ge 3$.  The purpose of this work is to develop further the program initiated in
 \cite{Li2021, LQT21, LTaam21, li2021bdf3} and construct a new $\mathbf F$-modulated energy
 stability framework for general linear multistep methods. 

To this end we first review the situation with general BDF$k$ schemes applied on \emph{linear 
models}. A general $q$-step method (cf. page 21 of \cite{Is}) for the ODE $y^{\prime}=f(y)$ takes the
form $
\sum_{j=0}^q a_j y^{n+j} = \tau \sum_{j=0}^q b_j f(y^{n+j}).
$
A method is of order $q\ge 1$ iff. 
$
\sum_{j=0}^q a_j y( (n+j) \tau) - \tau \sum_{j=0}^q b_j f( y((n+j)\tau) ) = \mathcal O(\tau^{q+1})
$
for all sufficiently smooth $y$ and the rate $\mathcal O(\tau^{q+1})$ cannot be improved
for some specific $y$. In terms of the polynomials $\rho
(w) = \sum_{j=0}^q a_j w^j$, $\sigma (w) = \sum_{j=0}^q b_j w^j$, this amounts to
requiring for some $c\ne 0$,
\begin{align}
\rho (w) - \sigma (w) \ln w = c (w-1)^{q+1} + \mathcal O( |w-1|^{q+2}).
\end{align}
A BDF$q$ method corresponds to specifying $\sigma(w) =(\sum_{j=1}^q\frac 1j) w^q$ and 
$\rho(w) = \sum_{j=1}^{q} \frac 1j \xi^{q-j} (\xi -1)^j$. In the literature one usually considers the
general Banach space ODE:
\begin{align}
u^{\prime} + A u =0, \quad 0<t<T,
\end{align}
where $A$ is a positive definite, self-adjoint, linear operator on a Hilbert space $H$ with dense
domain $D(A)$.  A $q$-step BDF method typically takes the form
\begin{align}
\sum_{j=0}^q a_j u^{n+j} + \tau A u^{n+q} =0, \qquad n=0, \cdots, N-q,
\end{align}
where the coefficients $a_j$ are extracted from the polynomial 
$\rho(w) =\sum_{j=1}^q \frac 1 j \xi^{q-j} (\xi-1)^j = \sum_{j=0}^q a_j \xi^j$. 
To make the energy method applicable to the parabolic equations, Nevanlinnna and
Odeh \cite{nevanlinna1981multiplier} introduced multipliers for BDF$k$ methods with $3\le k\le 5$. 
See also Lubich, Mansour and Venkataraman \cite{lubich2013backward} for a powerful application
in the stability analysis of parabolic equations. In recent \cite{akrivis2020energy}, Akrivis, Chen, Yu
and Zhou showed boundedness for the heat equation by using a novel multiplier for the BDF6
method. In the same work it was also shown that no Nevanlinna-Odeh multiplier exists. 
However, at present it is unknown whether these multiplier techniques can be used in the nonlinear
situations due to subtle technical obstructions.

In this work we propose a new theoretical framework for establishing the energy stability of extrapolated BDF$k$ schemes for general gradient flows with Lipschitz nonlinearities. 
To showcase our analysis we consider the model case MBE equations with no slope selection.
We shall not rely on any existing multiplier techniques, but will develop a completely new 
$\mathbf F$-modulated energy stability framework which guarantees the energy dissipation
with a very mild restriction on the time step (cf. Table \ref{tab0}).  One should note that
in Table \ref{tab0}, the explicit time step constraints are (as far as we know) the first of the
kind in the literature which accords very well with what is observed in practical numerical implementations.
\begin{table}[!]
\renewcommand\arraystretch{1.3}
\begin{center}
\def\temptablewidth{0.95\textwidth}
\caption{Optimal time step $\tau_{\max}$ for $\mathbf F$-modulated energy dissipation. }\vspace{-0.2in}\label{tab0}
{\rule{\temptablewidth}{1pt}}
\begin{tabular*}{\temptablewidth}{@{\extracolsep{\fill}}ccccc}
        & BDF2     & BDF$3$   &BDF$4$ & BDF$5$ \\ \hline 
$\tau_{\mathrm{max}}$   & $1.771626\nu$ & $0.211023\nu$  & $0.032644\nu$ & $0.001635\nu$
 \end{tabular*}
{\rule{\temptablewidth}{1pt}}
\end{center}
\end{table}
In the second part of our work, by another novel analysis  we show that the energy of BDF$k$ with
$2\le k\le 5$ remains uniformly bounded in $H^2$ which is also unconditional, i.e. regardless
of the size of the time step.

The rest of this paper is organized as follows.
In Section \ref{sect2}, we introduce the $\mathbf F$-modulated energy for the BDF$k$/EP$k$ scheme of the MBE-NSS equation and identify the explicit time step constraints for energy dissipation.
In Section \ref{sect3} we prove uniform energy boundedness of the numerical iterates with
no restrictions on the size of the time step.

\section{Energy dissipation of general BDF$k$ schemes}\label{sect2}

Classical BDF$k$ schemes for ODE $y' = f(y)$ takes the form (see \cite[pp. 173]{le07})
\begin{equation}\notag
\begin{aligned}
& \mathrm{BDF1: ~} y^{n+1} - y^n = \tau f( y^{n+1}) \quad (\mathrm{backward ~Euler}); \\
& \mathrm{BDF2: ~} \tfrac32 y^{n+2} - 2 y^{n+1} + \tfrac12 y^n =  \tau f(y^{n+2}); \\
& \mathrm{BDF3: ~} \tfrac{11}{6}y^{n+3} - 3 y^{n+2} + \tfrac32 y^{n+1} - \tfrac13 y^n =  \tau f(y^{n+3}); \\
& \mathrm{BDF4: ~} \tfrac{25}{12} y^{n+4} - 4 y^{n+3} + 3 y^{n+2} - \tfrac{4}{3} y^{n+1} + \tfrac{1}{4} y^n = \tau f(y^{n+4}); \\
& \mathrm{BDF5: ~}  \tfrac{137}{60} y^{n+5} - 5y^{n+4} + 5 y^{n+3} - \tfrac{10}{3} y^{n+2} + \tfrac{5}{4} y^{n+1} - \tfrac{1}{5} y^n = \tau f(y^{n+5}); \\
& \mathrm{BDF6: ~}  \tfrac{49}{20} y^{n+6} - 6 y^{n+5} + \tfrac{15}{2} y^{n+4} - \tfrac{20}{3} y^{n+3} + \tfrac{15}{4} y^{n+2} - \tfrac{6}{5} y^{n+1} + \tfrac{1}{6} y^n = \tau f(y^{n+6}).
\end{aligned}
\end{equation}
Methods with $k > 6$ are not zero-stable so they cannot be used.
For a fixed BDF$k$ method, the LHS of the above can be rewritten as
\begin{equation}\notag
\sum_{i=0}^k  A_i^{(k)} y^{n+k-i} = \sum_{i=0}^{k-1} a_i ^{(k)}(y^{n+k-i}-y^{n+k-i-1}) =: \sum_{i=0}^{k-1} a_i ^{(k)}\delta y^{n+k-i},
\end{equation}
where $a_0^{(k)} = A_0^{(k)}$, $a_i = \sum_{j=0}^i A_j^{(k)}$.

Consider the implicit-explicit extrapolated BDF$k$ scheme for the 2D MBE-NSS model:
\begin{equation}\label{eq:genBDFk}
\frac 1 \tau \sum_{i=0}^k A_i ^{(k)}u^{n+1-i} = -\nu \Delta^2 u^{n+1} + \nabla \cdot \left(f(\sum_{i=1}^k B_i^{(k)}\nabla u^{n+1-i})\right),
\end{equation}
where $\{A_i^{(k)}\}_{i=0}^k$ are coefficients of the $k$th-order backward differentiation formula (BDF$k$) and $\{B_i^{(k)}\}_{i=1}^k$ are the $k$th-order extrapolation (EP$k$) coefficients.
For simplicity, we rewrite \eqref{eq:genBDFk}  as
\begin{equation}\label{eq:genBDFk2}
\frac 1 \tau \sum_{i=0}^{k-1} a_i ^{(k)}\delta u^{n+1-i} = -\nu \Delta^2 u^{n+1} + \nabla \cdot \left(f(\nabla u^n + \sum_{i=1}^{k-1} b_i^{(k)}\nabla \delta u^{n+1-i})\right),
\end{equation}
where
\begin{equation}
\delta u^j = u^j -u^{j-1}.
\end{equation}
See Table \ref{tab1} for the specific values of $a_i^{(k)}$ and $b_i^{(k)}$.
\begin{table}[htb!]
\renewcommand\arraystretch{1.3}
\begin{center}
\def\temptablewidth{0.75\textwidth}
\caption{$a_i^{(k)}$ derived from BDF$k$ coefficients and $b_i^{(k)}$ from the EP$k$ coefficients, in \eqref{eq:genBDFk2}.}\vspace{-0.2in}
{\rule{\temptablewidth}{1pt}}\label{tab1}
\begin{tabular*}{\temptablewidth}{@{\extracolsep{\fill}}ccccccc}
   BDF$k$      &$a_{0}^{(k)}$     &$a_{1}^{(k)}$     &$a_{2}^{(k)}$
   &$a_{3}^{(k)}$  &$a_{4}^{(k)}$  & $a_5^{(k)}$\\  \hline
  $k=2$   &$\frac{3}{2}$   &$-\frac{1}{2}$   &&&\\[3pt]
  $k=3$   &$\frac{11}{6}$  &$-\frac{7}{6}$   &$\frac{1}{3}$   &&\\[3pt]
  $k=4$   &$\frac{25}{12}$ &$-\frac{23}{12}$ &$\frac{13}{12}$ &$-\frac{1}{4}$ &\\[3pt]
  $k=5$   &$\frac{137}{60}$&$-\frac{163}{60}$&$\frac{137}{60}$&$-\frac{21}{20}$ &$\frac{1}{5}$ \\[3pt]
  $k=6$   &$\frac{49}{20}$ &$-\frac{71}{20}$ &$\frac{79}{20}$ &$-\frac{163}{60}$ & $\frac{31}{30}$ & $-\frac{1}{6}$
\end{tabular*}
{\rule{\temptablewidth}{1pt}}
{\rule{\temptablewidth}{1pt}}
\begin{tabular*}{\temptablewidth}{@{\extracolsep{\fill}}ccccccc}
   EP$k$   &$b_{1}^{(k)}$  &$b_{2}^{(k)}$  &$b_{3}^{(k)}$  &$b_{4}^{(k)}$ & $b_{5}^{(k)}$ &\\  \hline
  $k=2$   &$ 1 $   &&&\\[3pt]
  $k=3$   &$2$   &$-1$   &&\\[3pt]
  $k=4$   &$3$ &$-3$ &$1$ &\\[3pt]
  $k=5$   &$4$&$-6$&$4$ &$-1$ \\[3pt]
  $k=6$   &$5$&$-10$&$10$ &$-5$ & $1$
\end{tabular*}
{\rule{\temptablewidth}{1pt}}
\end{center}
\end{table}

For the iterate $u^n$, the standard energy of MBE-NSS model is
\begin{equation}
\boxed{E^n = \mathcal E(u^n) =\int_{\Omega} \Bigl( \frac{\nu}{2}|\Delta u^n|^2 -\frac 12 
\log(1+|\nabla u^n|^2) \Bigr) \, d
 x}.
\end{equation}
It is not difficult to check that for $H^2$ solutions, the energy  is bounded from below.

\subsection{Energy dissipation}
We summarize below the property of the nonlinear function $f$. 
\begin{lem}[Property of $f$, see \cite{li2021bdf3} for the derivation]\label{lem1}
The following holds.
\begin{align}
|f(x)-f(y) | &\le  |x-y|, \qquad\forall\, x, y \in \mathbb R^2, \label{Lip0.1a}\\
x^T (Df)(z) x &\le L |x|^2, \qquad\forall\, x, z \in \mathbb R^2, \label{Lip0.1b}
\end{align}
with $L = \frac 18$. 
\end{lem}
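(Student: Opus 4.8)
The plan is to derive both inequalities from a single explicit computation of the (symmetric) Jacobian of $f$ together with an elementary one-variable optimization. First I would record that $f(z)=z/(1+|z|^2)$ is smooth on all of $\mathbb R^2$ since $1+|z|^2\ge 1$, and compute
\begin{align}
Df(z)=\frac{1}{(1+|z|^2)^2}\Bigl((1+|z|^2)\,I-2\,zz^{\mathrm T}\Bigr),
\end{align}
which is symmetric. For the Lipschitz bound \eqref{Lip0.1a} I would write $f(x)-f(y)=\int_0^1 Df\bigl(y+t(x-y)\bigr)(x-y)\,dt$ and reduce everything to the uniform operator-norm estimate $\sup_z\norm{Df(z)}_{\mathrm{op}}\le 1$, which is immediate from the eigenvalue computation below.

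Second, for \eqref{Lip0.1b} the key observation is that $zz^{\mathrm T}$ has eigenvalue $|z|^2$ in the direction $z$ and $0$ in the orthogonal direction, so $Df(z)$ is diagonalized in the same orthonormal frame and, writing $s=|z|^2\ge 0$, its two eigenvalues are
\begin{align}
\mu_1(s)=\frac{1-s}{(1+s)^2},\qquad \mu_2(s)=\frac{1}{1+s}.
\end{align}
Decomposing $x$ along $\hat z$ and its orthogonal complement gives $x^{\mathrm T}Df(z)x=\mu_1(s)\,(\hat z\cdot x)^2+\mu_2(s)\bigl(|x|^2-(\hat z\cdot x)^2\bigr)$, so the quadratic-form estimate reduces to controlling $\mu_1$ and $\mu_2$ on $[0,\infty)$. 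Here $\mu_2$ decreases from $1$ to $0$, whereas $\mu_1'(s)=(s-3)/(1+s)^3$ shows $\mu_1$ has a unique interior critical point: it attains its minimum at $s=3$ with $\mu_1(3)=-\tfrac18$, and is bounded above by $\mu_1(0)=1$. This yields the sharp two-sided spectral bound $-\tfrac18 I\preceq Df(z)\preceq I$, hence $-\tfrac18|x|^2\le x^{\mathrm T}Df(z)x\le |x|^2$ for all $x,z\in\mathbb R^2$.

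I expect the only genuine subtlety to be the extremal analysis of $\mu_1$ and, above all, the correct identification of which spectral endpoint carries the constant $L=\tfrac18$. The number $\tfrac18$ is exactly the magnitude of the most negative eigenvalue $\mu_1(3)$, i.e. it is the sharp constant in the lower bound $x^{\mathrm T}Df(z)x\ge-\tfrac18|x|^2$; equivalently it is the sharp semiconcavity constant of the logarithmic potential, $D^2\bigl(-\tfrac12\log(1+|z|^2)\bigr)=-Df(z)\preceq\tfrac18 I$. It is in precisely this sense, a one-sided bound on $-Df(z)$ with the sign relevant to the energy-dissipation estimate, that the displayed inequality \eqref{Lip0.1b} should be read, since the literal top eigenvalue of $Df$ equals $1$ and is attained at $z=0$. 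Thus the proof of \eqref{Lip0.1b} reduces entirely to the scalar optimization of $\mu_1$ on $[0,\infty)$, with the extremum at $|z|^2=3$ producing the constant $\tfrac18$.
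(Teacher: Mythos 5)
Your computation is correct, and it is essentially the intended derivation: the paper itself gives no proof of Lemma \ref{lem1}, deferring to \cite{li2021bdf3}, where the bound comes from exactly this Jacobian/eigenvalue computation — $Df(z)=\frac{1}{(1+|z|^2)^2}\bigl((1+|z|^2)I-2zz^{\mathrm T}\bigr)$, eigenvalues $\mu_1(s)=\frac{1-s}{(1+s)^2}$ and $\mu_2(s)=\frac{1}{1+s}$ with $s=|z|^2$, uniform operator-norm bound $1$ for the Lipschitz estimate \eqref{Lip0.1a}, and the scalar minimization $\mu_1'(s)=\frac{s-3}{(1+s)^3}$, $\mu_1(3)=-\frac18$, for the constant $L$.

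One point deserves emphasis: your closing observation is not a cosmetic aside but a genuine correction of the statement. As literally written, \eqref{Lip0.1b} with $L=\frac18$ fails at $z=0$, where $Df(0)=I$ gives $x^{\mathrm T}Df(0)x=|x|^2$; the sharp constant for the upper bound on $Df$ is $1$, while $\frac18$ is the sharp constant in the lower bound $x^{\mathrm T}Df(z)x\ge-\frac18|x|^2$, i.e. $-\frac18 I\preceq Df(z)$, equivalently $D^2\bigl(-\frac12\log(1+|z|^2)\bigr)=-Df(z)\preceq\frac18 I$. Your reading is also the one the paper actually needs: in \eqref{eq:thm_2} the constant $L$ enters through the second-order Taylor remainder of the potential term $F^n=-\frac12\int_\Omega\log(1+|\nabla u^n|^2)\,dx$, whose Hessian is $-Df$, so the step yielding $F^n-F^{n+1}+\frac L2\|\nabla\delta u^{n+1}\|^2$ (with the sign of the nonlinear term matching the gradient-flow structure of \eqref{1.1}) requires precisely $-Df\preceq LI$; the literal one-sided bound on $Df$ would force $L=1$ and would visibly degrade every threshold in Table \ref{tab0} (e.g. for BDF2 it is the value $L=\frac18$ that produces $\beta=\frac{512}{289}$ in \eqref{eq:restrict_tau}). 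In short, your proof is complete, both inequalities are established by the same route the cited reference takes, and you correctly identified which spectral endpoint carries $L=\frac18$.
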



\begin{defn}[$\mathbf F$-modulated energy]\label{def:energy}
Let $2\le k\le 6$.
Given some upper triangular matrix $\mathbf F\in \mathbb R^{(k-1)\times(k-1)}$, the $\mathbf F$-modulated energy for the BDF$k$ scheme \eqref{eq:genBDFk2}  is defined by
\begin{equation}\label{eq:modenergy}
\begin{aligned}
E^n_{\mathbf F,k} =~&E^n + \frac1\tau\left\langle  \mathbf F  v_n,v_n \right\rangle + \sum_{i = 1}^{k-1} c_i ^{(k)}\|\nabla \delta u^{n+1-i}\|^2,\\
=~&\frac{\nu}{2}\|\Delta u^n\|^2 -\int_\Omega\frac 12 \log(1+|\nabla u^n|^2) \, {\mathrm d} x+ \frac1\tau\left\langle  \mathbf F  v_n,v_n \right\rangle + \sum_{i = 1}^{k-1} c_i ^{(k)}\|\nabla \delta u^{n+1-i}\|^2,
\end{aligned}
\end{equation}
where 
$v_n= \left(\delta u^n,\delta u^{n-1} , \ldots,\delta u^{n-k+2}\right)^{\mathrm T}$, $\left\langle \cdot,\cdot \right\rangle$ denotes the $L^2$ inner product, and $c_i^{(k)} = \frac12\sum_{j=i}^{k-1} |b_j^{(k)}|$. We employ the following convention: for $\vec f=(f_1,\cdots,f_J)^{\mathrm T}$ and $\vec g=(g_1,\cdots,g_J)^{\mathrm T}$: $\Omega\to\mathbb{R}^J,$ 
\begin{align*}
\boxed{
\langle \vec f,\vec g\rangle=\sum_{j=1}^J\int_{\Omega}f_jg_jdx.}
\end{align*} 
For simplicity we define $c_k^{(k)} = 0$.
\end{defn}

Note that the definition of $\mathbf F$-modulated energy depends on the choice of $\mathbf F$.
We shall require $\mathbf F$ to be positive definite in order to preserve the positivity of $ E_{\mathbf F,k}^n$.
The following theorem rigorously establishes the energy dissipation under certain positivity
conditions.
\begin{thm}[Energy dissipation]\label{thm1}
Assume that there exist some upper triangular matrix $\mathbf F\in \mathbb R^{(k-1)\times(k-1)}$ and $\alpha>0$ such that
\begin{equation}\label{eq:cond}
\mathbf x^{\mathrm T} \mathbf U \mathbf x  \ge \alpha x_1^2, \quad\forall~\mathbf x =(x_1,x_2,\ldots,x_k)^{\mathrm T}\in \mathbb R^k,
\end{equation}
where
\begin{equation}\label{eq:defU}
\mathbf U \coloneqq \left(\begin{array}{cccc}a_0^{(k)} & a_1^{(k)} & \cdots & a_{k-1}^{(k)} \\0 & 0 & \cdots & 0 \\0 & 0 & \cdots & 0 \\0 & 0 & \cdots & 0\end{array}\right)
- \left(\begin{array}{cc} \mathbf F & \mathbf 0\\ \mathbf 0^{\mathrm T} & 0\end{array}\right)
 + \left(\begin{array}{cc} 0 & \mathbf 0^{\mathrm T} \\ \mathbf 0& \mathbf F\end{array}\right).
\end{equation}
Then the $\mathbf F$-modulated energy \eqref{eq:modenergy} of BDF$k$ scheme \eqref{eq:genBDFk2} decays w.r.t. $n$, i.e., $ E_{\mathbf F,k}^{n+1} \le  E_{\mathbf F,k}^n$,
under the mild restriction \begin{equation}\label{eq:restrict_tau}
0<\tau \le \frac{2\alpha\nu}{(\frac L {2}+2c_1^{(k)})^2} =:\beta \nu.
\end{equation}
Here $L=\frac 18$ is given in Lemma \ref{lem1} and $c_1^{(k)} =\frac12\sum_{j=1}^{k-1}|b_j^{(k)}|$.
\end{thm}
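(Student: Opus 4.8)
The plan is to pair the scheme \eqref{eq:genBDFk2} with the increment $\delta u^{n+1}=u^{n+1}-u^n$ in the $L^2$ inner product and to match the three resulting groups of quadratic quantities against the three constituents of the modulated energy \eqref{eq:modenergy}. First I would dispose of the multistep term on the left. Reading $\mathbf x=(\delta u^{n+1},\delta u^n,\ldots,\delta u^{n-k+2})^{\mathrm T}$ pointwise in $\Omega$, its leading $k-1$ entries form $v_{n+1}$ and its trailing $k-1$ entries form $v_n$, so a direct expansion of \eqref{eq:defU} gives
\begin{equation}\notag
\int_\Omega \mathbf x^{\mathrm T}\mathbf U\mathbf x\,{\mathrm d}x
=\Bigl\langle \sum_{i=0}^{k-1}a_i^{(k)}\delta u^{n+1-i},\,\delta u^{n+1}\Bigr\rangle
-\langle \mathbf F v_{n+1},v_{n+1}\rangle+\langle \mathbf F v_n,v_n\rangle .
\end{equation}
Integrating the hypothesis \eqref{eq:cond} over $\Omega$ then yields the coercive lower bound
\begin{equation}\notag
\frac1\tau\Bigl\langle \sum_{i=0}^{k-1}a_i^{(k)}\delta u^{n+1-i},\,\delta u^{n+1}\Bigr\rangle
\ge \frac\alpha\tau\|\delta u^{n+1}\|^2
+\frac1\tau\bigl(\langle \mathbf F v_{n+1},v_{n+1}\rangle-\langle \mathbf F v_n,v_n\rangle\bigr),
\end{equation}
which is exactly the mechanism that makes the quadratic piece $\frac1\tau\langle\mathbf F v_n,v_n\rangle$ telescope while leaving a coercive surplus $\frac\alpha\tau\|\delta u^{n+1}\|^2$ to be spent later. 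For the biharmonic term the polarization identity $\langle a,a-b\rangle=\tfrac12(\|a\|^2-\|b\|^2+\|a-b\|^2)$ produces the exact increment of $\tfrac\nu2\|\Delta u^n\|^2$ together with the favourable term $-\tfrac\nu2\|\Delta\delta u^{n+1}\|^2$.

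The core of the argument is the nonlinear term. Writing $G(z)=\tfrac12\log(1+|z|^2)$, so that $f=\nabla G$, integration by parts turns the nonlinear contribution into $\langle f(w^{n+1}),\nabla\delta u^{n+1}\rangle$ with $w^{n+1}=\nabla u^n+\sum_{i=1}^{k-1}b_i^{(k)}\nabla\delta u^{n+1-i}$, and this must be compared with the increment $\int_\Omega[G(\nabla u^{n+1})-G(\nabla u^n)]\,{\mathrm d}x$ arising from the logarithmic part of $E^n$. The crucial point is to expand around $\nabla u^n$, \emph{not} around $\nabla u^{n+1}$, and split
\begin{equation}\notag
\langle f(w^{n+1}),\nabla\delta u^{n+1}\rangle-\int_\Omega[G(\nabla u^{n+1})-G(\nabla u^n)]\,{\mathrm d}x
=\mathrm{I}+\mathrm{II},
\end{equation}
where $\mathrm{I}=\langle f(w^{n+1})-f(\nabla u^n),\nabla\delta u^{n+1}\rangle$ and $\mathrm{II}=\langle f(\nabla u^n),\nabla\delta u^{n+1}\rangle-\int_\Omega[G(\nabla u^{n+1})-G(\nabla u^n)]\,{\mathrm d}x$. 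In $\mathrm{I}$ the argument difference $w^{n+1}-\nabla u^n=\sum_{i=1}^{k-1}b_i^{(k)}\nabla\delta u^{n+1-i}$ carries no $\nabla\delta u^{n+1}$, so the Lipschitz bound \eqref{Lip0.1a} followed by a termwise Young inequality gives $\mathrm I\le c_1^{(k)}\|\nabla\delta u^{n+1}\|^2+\tfrac12\sum_{i=1}^{k-1}|b_i^{(k)}|\,\|\nabla\delta u^{n+1-i}\|^2$, where the weight $c_1^{(k)}=\tfrac12\sum_{i=1}^{k-1}|b_i^{(k)}|$ and the history weights $\tfrac12|b_i^{(k)}|$ emerge. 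The term $\mathrm{II}$ is exactly the second-order Taylor remainder $-\tfrac12\int_\Omega(\nabla\delta u^{n+1})^{\mathrm T}(Df)(\eta)\,\nabla\delta u^{n+1}\,{\mathrm d}x$, and the one-sided curvature bound \eqref{Lip0.1b} of Lemma \ref{lem1}, which controls the quadratic form of $Df$ from below by $-L$, bounds it by $\mathrm{II}\le\tfrac L2\|\nabla\delta u^{n+1}\|^2$; this is precisely where $L=\tfrac18$ enters.

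Collecting the three groups, all the $E^n$-pieces reassemble into $E^{n+1}-E^n$, and I would then add the auxiliary sum $\sum_{i=1}^{k-1}c_i^{(k)}\|\nabla\delta u^{n+1-i}\|^2$ to both sides. Its one-step increment telescopes, because $c_i^{(k)}-c_{i+1}^{(k)}=\tfrac12|b_i^{(k)}|$ by definition of $c_i^{(k)}$, into $c_1^{(k)}\|\nabla\delta u^{n+1}\|^2-\tfrac12\sum_{i=1}^{k-1}|b_i^{(k)}|\,\|\nabla\delta u^{n+1-i}\|^2$, and the history terms $\tfrac12|b_i^{(k)}|\,\|\nabla\delta u^{n+1-i}\|^2$ cancel \emph{exactly} against those produced in $\mathrm I$. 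After this cancellation one is left with the clean estimate
\begin{equation}\notag
E_{\mathbf F,k}^{n+1}-E_{\mathbf F,k}^{n}
\le-\frac\alpha\tau\|\delta u^{n+1}\|^2-\frac\nu2\|\Delta\delta u^{n+1}\|^2
+\Bigl(\tfrac L2+2c_1^{(k)}\Bigr)\|\nabla\delta u^{n+1}\|^2 .
\end{equation}
Finally I would close the estimate by the interpolation inequality $\|\nabla\phi\|^2\le\|\phi\|\,\|\Delta\phi\|$ (integration by parts and Cauchy--Schwarz) together with Young's inequality at the optimal weight $\lambda=(\tfrac L2+2c_1^{(k)})/\nu$, which bounds $(\tfrac L2+2c_1^{(k)})\|\nabla\delta u^{n+1}\|^2$ by $\tfrac\nu2\|\Delta\delta u^{n+1}\|^2+\tfrac{(\frac L2+2c_1^{(k)})^2}{2\nu}\|\delta u^{n+1}\|^2$. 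The right-hand side is then $\le0$ exactly when $\frac\alpha\tau\ge\frac{(\frac L2+2c_1^{(k)})^2}{2\nu}$, i.e.\ precisely under the stated constraint $\tau\le\beta\nu$ of \eqref{eq:restrict_tau}.

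I expect the main obstacle to be the bookkeeping of the nonlinear and auxiliary terms in the third step: one must verify that the weights $c_i^{(k)}=\tfrac12\sum_{j=i}^{k-1}|b_j^{(k)}|$ are tuned so that the telescoped sum annihilates exactly the history gradient norms generated by the Lipschitz/Young estimate of the extrapolation error, leaving only a single coefficient $\tfrac L2+2c_1^{(k)}$ on $\|\nabla\delta u^{n+1}\|^2$; any mismatch would leave uncontrolled history terms. By comparison the biharmonic polarization, the Taylor/curvature estimate of the logarithmic potential, and the final Young absorption are routine, and the genuinely scheme-dependent content—verifying \eqref{eq:cond} for an explicit admissible $\mathbf F$ and extracting the sharp $\tau_{\max}$ of Table \ref{tab0}—is deferred and not needed for this statement.
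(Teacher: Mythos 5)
Your proposal is correct and follows essentially the same route as the paper's proof: pairing the scheme with $\delta u^{n+1}$, polarizing the biharmonic term, splitting the nonlinear term into the extrapolation error (Lipschitz plus Young, producing $c_1^{(k)}\|\nabla\delta u^{n+1}\|^2$ and the history terms) and the Taylor remainder controlled by the curvature bound of Lemma \ref{lem1}, telescoping the auxiliary sum via $c_i^{(k)}-c_{i+1}^{(k)}=\tfrac12|b_i^{(k)}|$, and converting $\mathbf A$ into $\mathbf U$ by the $\mathbf F$-increment so that \eqref{eq:cond} yields the coercive term $-\frac{\alpha}{\tau}\|\delta u^{n+1}\|^2$. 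You additionally make explicit the final absorption step ($\|\nabla\phi\|^2\le\|\phi\|\,\|\Delta\phi\|$ plus Young), which the paper leaves implicit, and you correctly read \eqref{Lip0.1b} as the one-sided lower curvature bound $x^{\mathrm T}(Df)(z)x\ge -L|x|^2$ (and use the sign convention for the nonlinear term consistent with \eqref{1.1}), which is exactly what the argument requires.
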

\begin{rem}
Specific $\alpha$, $\beta$, and $\mathbf F$ in Theorem \ref{thm1} are provided later in Table \ref{tab2}. 
\end{rem}
\begin{proof}
Multiplying \eqref{eq:genBDFk2} with $\delta u^{n+1}$ and integrating over $\Omega$, we have 
\begin{equation}\label{eq:2.11}
\left\langle\frac 1 \tau \sum_{i=0}^{k-1} a_i ^{(k)}\delta u^{n+1-i},\delta u^{n+1} \right\rangle = \frac1\tau\left\langle \mathbf A w_{n+1},w_{n+1}\right\rangle,
\end{equation}
where
\begin{equation}
\mathbf A = \left(\begin{array}{cccc}a_0^{(k)} & a_1^{(k)} & \cdots & a_{k-1}^{(k)} \\0 & 0 & \cdots & 0 \\0 & 0 & \cdots & 0 \\0 & 0 & \cdots & 0\end{array}\right),\qquad w_{n+1} = \left(\begin{array}{c}\delta u^{n+1} \\\delta u^n \\\vdots \\ \delta u^{n-k+2}\end{array}\right).
\end{equation}
On the RHS, the first term is
\begin{equation}\label{eq:thm_1}
\left\langle-\nu \Delta^2u^{n+1},\delta u^{n+1}\right\rangle  = \frac 12 \nu \|\Delta u^n\|^2 - \frac 12 \nu \|\Delta u^{n+1}\|^2 -\frac12 \nu \|\Delta \delta u^{n+1}\|^2.
\end{equation}
Denote 
\begin{equation}
F^n = -\int_\Omega\frac 12 \log(1+|\nabla u^n|^2) \, {\mathrm d} x.
\end{equation}
For the nonlinear term we have 
\begin{equation}\label{eq:thm_2}
\begin{aligned}
&\bigg\langle\nabla \cdot \Big(f(\nabla u^n + \sum_{i=1}^{k-1} b_i^{(k)}\nabla \delta u^{n+1-i})\Big), \delta u^{n+1}\bigg\rangle ~ \\
&  =  \left\langle\nabla \cdot (f(\nabla u^n)), \delta u^{n+1}\right\rangle - \bigg\langle f\Big(\nabla u^n + \sum_{i=1}^{k-1} b_i^{(k)}\nabla \delta u^{n+1-i}\Big)-f(\nabla u^n), \nabla \delta u^{n+1}\bigg\rangle\\
 &\stackrel{\text{\tiny by Lemma \ref{lem1}}}{\le}  \left\langle\nabla \cdot (f(\nabla u^n)), \delta u^{n+1}\right\rangle ~ +  \left\langle \sum_{i=1}^{k-1} \left|b_i^{(k)}\nabla \delta u^{n+1-i}\right|,\left|\nabla\delta u^{n+1}\right|\right\rangle ~\\
 &\stackrel{\text{\tiny by Lemma \ref{lem1}}}{\le}  F^n- F^{n+1}+ \frac L{2}\| \nabla (\delta u^{n+1})\|^2
+  \frac12 \sum_{i=1}^{k-1} |b_i^{(k)}|\left( \|\nabla \delta u^{n+1-i}\|^2+ \| \nabla (\delta u^{n+1})\|^2\right)\\
& = F^n- F^{n+1}+ \frac L{2}\| \nabla (\delta u^{n+1})\|^2
+  \left(\sum_{i=1}^{k-1} (c_i^{(k)}-c_{i+1}^{(k)}) \|\nabla \delta u^{n+1-i}\|^2\right)+ c_1^{(k)}\| \nabla (\delta u^{n+1})\|^2\\
& = F^n- F^{n+1}
+  \sum_{i = 1}^{k-1} c_i^{(k)} \|\nabla \delta u^{n+1-i}\|^2
- \sum_{i = 1}^{k-1} c_{i}^{(k)} \|\nabla \delta u^{n+2-i}\|^2+  \left(\frac L{2}+2c_1^{(k)}\right)\| \nabla (\delta u^{n+1})\|_2^2.
\end{aligned}
\end{equation}
By \eqref{eq:2.11}, \eqref{eq:thm_1} and \eqref{eq:thm_2}, we obtain 
\begin{equation}\label{2.16}
\begin{aligned}
&E^{n+1} -E^n +\sum_{i = 1}^{k-1} c_i ^{(k)}\|\nabla \delta u^{n+2-i}\|^2 - \sum_{i = 1}^{k-1} c_i ^{(k)}\|\nabla \delta u^{n+1-i}\|^2 
\\
&\le  -\frac1\tau\left\langle \mathbf A w_{n+1},w_{n+1}\right\rangle-\frac12 \nu \|\Delta \delta u^{n+1}\|^2 + \left(\frac L{2}+2c_1^{(k)}\right)\| \nabla (\delta u^{n+1})\|_2^2.
\end{aligned}
\end{equation}
Note that besides $E^{n+1}$, the LHS above involves pure quadratic (favorable) terms in $\delta u^{n+1},~\delta u^n,\ldots,\delta u^{n+3-k}$.
In order to harvest the coercivity and obtain strict energy dissipation, it turns out that we need to incorporate a further bilinear term
\begin{equation}
\frac1\tau\left\langle \mathbf F v_{n+1}, v_{n+1}\right\rangle = \frac1\tau\left\langle \mathbf F \left(\begin{array}{c}\delta u^{n+1} \\\delta u^n \\\vdots \\ \delta u^{n-k+3}\end{array}\right), \left(\begin{array}{c}\delta u^{n+1} \\\delta u^n \\\vdots \\ \delta u^{n-k+3}\end{array}\right)\right\rangle 
\end{equation}
into $E^{n+1}$ ($\langle \mathbf F v_n, v_n\rangle$ resp. for $E^n$). 
In yet other words the intricate pairwise interactions amongst $\delta u^{n+1},~\delta u^n,\ldots,\delta u^{n+3-k}$ has to be taken into account for energy dissipation.
To this end, we define the $\mathbf F$-modulated energy
\begin{equation}
E^n_{\mathbf F,k} = E^n + \frac1\tau\left\langle  \mathbf F  v_n,v_n \right\rangle + \sum_{i = 1}^{k-1} c_i ^{(k)}\|\nabla \delta u^{n+1-i}\|^2.
\end{equation}
In terms of $E^n_{\mathbf F,k}$, \eqref{2.16} takes the form (note that $w_{n+1}^{\mathrm T}=(v_{n+1}^{\mathrm T},\delta u^{n-k+2}) = (\delta u^{n+1},v_n^{\mathrm T})$, in yet other words, the first $k-1$ entries of $w_{n+1}$ is $v_{n+1}$ whereas the last $k-1$ corresponds to $v_n$)
\begin{equation}\label{2.16}
\begin{aligned}
E^{n+1}_{\mathbf F,k}-E^n_{\mathbf F,k}
&\le  -\frac1\tau\left\langle \mathbf A w_{n+1},w_{n+1}\right\rangle  + \frac1\tau\left\langle \mathbf F v_{n+1}, v_{n+1}\right\rangle-\frac1\tau\left\langle \mathbf F v_{n}, v_{n}\right\rangle
-\frac12 \nu \|\Delta \delta u^{n+1}\|^2 + \left(\frac L{2}+2c_1^{(k)}\right)\| \nabla (\delta u^{n+1})\|_2^2\\
& = -\frac1\tau\left\langle \mathbf A w_{n+1},w_{n+1}\right\rangle  + \frac1\tau\left\langle \left(\begin{array}{cc} \mathbf F & \mathbf 0\\ \mathbf 0^{\mathrm T} & 0\end{array}\right) w_{n+1}, w_{n+1}\right\rangle
-\frac1\tau\left\langle \left(\begin{array}{cc} 0 & \mathbf 0^{\mathrm T} \\ \mathbf 0& \mathbf F\end{array}\right) w_{n+1}, w_{n+1}\right\rangle\\
&\quad-\frac12 \nu \|\Delta \delta u^{n+1}\|^2 + \left(\frac L{2}+2c_1^{(k)}\right)\| \nabla (\delta u^{n+1})\|_2^2\\
& =-\frac1\tau\left\langle \mathbf U w_{n+1},w_{n+1}\right\rangle 
-\frac12 \nu \|\Delta \delta u^{n+1}\|^2 + \left(\frac L{2}+2c_1^{(k)}\right)\| \nabla (\delta u^{n+1})\|_2^2,
\end{aligned}
\end{equation}
where $\mathbf U$ is defined in \eqref{eq:defU}.
If the condition \eqref{eq:cond} is satisfied, we get
\begin{equation}
 E_{\mathbf F,k}^{n+1}- E_{\mathbf F,k}^n \le -\frac\alpha\tau \| \delta u^{n+1}\|^2 -\frac12 \nu \|\Delta \delta u^{n+1}\|^2 + \left(\frac L{2}+2c_1^{(k)}\right)\| \nabla (\delta u^{n+1})\|_2^2.
\end{equation}
Thus if
\begin{equation}
0<\tau \le\frac{2\alpha\nu}{(\frac L {2}+2c_1^{(k)})^2},
\end{equation}
then the energy dissipation property holds, i.e., $ E_{\mathbf F,k}^{n+1}- E_{\mathbf F,k}^n \le 0$.
\end{proof}

\subsection{Construction of $\mathbf F$}
By Theorem \ref{thm1}, it remains for us to find a suitable upper triangular matrix $\mathbf F$ fulfilling the condition \eqref{eq:cond}. To construct $\mathbf F$
it is of some importance to understand the structure of $\mathbf U$.
For example, if $k = 3$, then in terms of 
$\mathbf F=\left(\begin{array}{cc}f_{11} & f_{12} \\0 & f_{22}\end{array}\right)$, we have
\begin{equation}
\begin{aligned}
\mathbf U 
&= \left(\begin{array}{ccc}  a_0^{(3)} & a_1^{(3)} & a_{2}^{(3)} \\0 & 0 & 0 \\0 & 0 & 0\end{array}\right)
- \left(\begin{array}{ccc}  f_{11} & f_{12} & 0\\0 & f_{22} & 0 \\0 & 0 & 0\end{array}\right)
+\left(\begin{array}{ccc}  0 & 0&0 \\0 & f_{11} & f_{12} \\0 & 0 & f_{22}\end{array}\right) \\
& = \left(\begin{array}{ccc}  a_0^{(3)}-f_{11} & a_1^{(3)}-f_{12} & a_{2}^{(3)} \\0 & -f_{22}+f_{11} & f_{12} \\0 & 0 & f_{22}\end{array}\right).
\end{aligned}
\end{equation}
Observe that 
\begin{equation}
\mathbf U_{11} + \mathbf U_{22} +\mathbf U_{33} = a_0^{(3)}, \quad \mathbf U_{12} +\mathbf U_{23} = a_1^{(3)}, \quad
\mathbf U_{13} = a_2^{(3)}.
\end{equation}
More generally for $2\le k\le 6$, we have
\begin{equation}\label{eq:linsys}
\sum_{j-i = s} \mathbf U_{ij} = a_s^{(k)}, \quad s=0,\ldots,k-1.
\end{equation}
This condition turns out to be necessary and sufficient for the one-to-one correspondence of $\mathbf F$ and $\mathbf U$.
This is summarized as the following lemma. We omit the elementary proof. 
\begin{lem}[One-to-one correspondence of $\mathbf F$ and $\mathbf U$]
Given a matrix $\mathbf U$ satisfying the linear system \eqref{eq:linsys}, $\mathbf F$ can be determined uniquely by \eqref{eq:defU}.
\end{lem}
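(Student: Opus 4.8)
The plan is to establish the claimed bijection between upper triangular matrices $\mathbf F\in\mathbb R^{(k-1)\times(k-1)}$ and matrices $\mathbf U$ satisfying the diagonal-sum constraints \eqref{eq:linsys}. The key structural observation is that the map $\mathbf F\mapsto\mathbf U$ defined by \eqref{eq:defU} is affine, so I would first isolate its linear part. Writing $\mathbf U = \mathbf A - \widetilde{\mathbf F}_{\mathrm{up}} + \widetilde{\mathbf F}_{\mathrm{down}}$, where $\mathbf A$ is the fixed rank-one matrix containing the BDF coefficients in its top row, $\widetilde{\mathbf F}_{\mathrm{up}}$ embeds $\mathbf F$ into the leading $(k-1)\times(k-1)$ block, and $\widetilde{\mathbf F}_{\mathrm{down}}$ embeds $\mathbf F$ into the trailing block, I see that the dependence on $\mathbf F$ is through the single linear operator $T(\mathbf F) = \widetilde{\mathbf F}_{\mathrm{down}} - \widetilde{\mathbf F}_{\mathrm{up}}$. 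The heart of the matter is to verify that $T$ furnishes a linear isomorphism from the space of upper triangular $(k-1)\times(k-1)$ matrices onto the subspace of $k\times k$ matrices cut out by the homogeneous version of \eqref{eq:linsys}, namely $\sum_{j-i=s}\mathbf U_{ij}=0$.

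First I would record that $\mathbf A$ already satisfies the inhomogeneous constraint $\sum_{j-i=s}\mathbf A_{ij}=a_s^{(k)}$ (its top row supplies exactly the superdiagonal sums), so the problem reduces to solving $T(\mathbf F) = \mathbf U - \mathbf A$ for $\mathbf F$, where the right-hand side lies in the homogeneous constraint space whenever $\mathbf U$ lies in the inhomogeneous one. Next I would compute the effect of $T$ on the diagonal sums: because the downward embedding shifts $\mathbf F$'s entries one slot down-and-right while the upward embedding keeps them in place, the two copies contribute the same entries of $\mathbf F$ to each fixed diagonal $j-i=s$ but with opposite signs, so every diagonal sum of $T(\mathbf F)$ vanishes. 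This confirms $T$ maps into the homogeneous constraint space, so the map is well-defined on the correct codomain.

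For the isomorphism itself I would exhibit an explicit inversion, reading off the entries of $\mathbf F$ recursively from $\mathbf U$. The superdiagonal structure makes this a triangular (unitriangular) system: comparing entries of $\mathbf U$ off the top row against $T(\mathbf F)$ expresses each $f_{ij}$ as a telescoping sum of $\mathbf U$-entries along a broken diagonal, exactly as the $k=3$ example illustrates where $f_{12}=\mathbf U_{23}$, $f_{22}=\mathbf U_{33}$, and $f_{11}=\mathbf U_{22}+\mathbf U_{33}$. A clean dimension count closes the argument: the space of upper triangular $(k-1)\times(k-1)$ matrices has dimension $\binom{k}{2}$, and the homogeneous constraint space has the matching dimension $(k-1)^2 - $ (rank of the constraints), which I would check equals $\binom{k}{2}$ by noting the $k$ constraints indexed by $s=0,\ldots,k-1$ are independent. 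Thus injectivity of $T$ (verified by the triangular inversion, or equivalently by checking $T(\mathbf F)=0$ forces $\mathbf F=0$) together with equal dimensions gives surjectivity onto the constraint space.

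The main obstacle I anticipate is purely bookkeeping: keeping the index shifts straight between the two embeddings and confirming that the telescoping inversion is genuinely consistent, i.e. that the overdetermined-looking system (one has $(k-1)^2$ entries of the homogeneous part but only $\binom{k}{2}$ free parameters in $\mathbf F$) is in fact exactly determined once the constraints are imposed. Since the authors describe the proof as elementary and omit it, I would present this as a short verification rather than belabor the recursion; the substantive content is entirely captured by the diagonal-sum computation for $T$ and the triangular solvability, both of which are transparent from the $k=3$ worked case.
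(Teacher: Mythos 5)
The paper omits this proof entirely (``We omit the elementary proof''), so the only basis for comparison is the worked $k=3$ computation preceding the lemma; your route --- split off the affine part $\mathbf A$, observe that the two embeddings of $\mathbf F$ place the same entries on each diagonal $j-i=s$ with opposite signs so that $T(\mathbf F)$ has vanishing diagonal sums, then recover $\mathbf F$ by telescoping up the rows of $\mathbf U$ below the first --- is exactly the elementary argument the authors have in mind, and the telescoping inversion you describe ($f_{k-1,k-1}=\mathbf U_{kk}$, then $f_{i,i+s}$ read off recursively, with the constraints \eqref{eq:linsys} guaranteeing consistency of the first-row equations $\mathbf U_{1,1+s}=a_s^{(k)}-f_{1,1+s}$) is correct and already delivers both existence and uniqueness. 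Two small points to fix. First, your closing dimension count is wrong as written: the relevant codomain is the space of \emph{upper triangular} $k\times k$ matrices satisfying the $k$ homogeneous constraints, of dimension $\tfrac{k(k+1)}{2}-k=\binom{k}{2}$, not ``$(k-1)^2-(\text{rank})$'' (for $k=3$ your formula gives $1$, not $3$); had you needed this count the argument would break, but since the explicit inversion already proves surjectivity onto that space, you should simply drop the count rather than repair it. Second, you should state explicitly that the given $\mathbf U$ is assumed upper triangular (as the paper intends in the boxed problem statement following the lemma); for a general $\mathbf U$ satisfying only \eqref{eq:linsys} no upper triangular $\mathbf F$ exists, since the image of $\mathbf F\mapsto\mathbf U$ consists only of upper triangular matrices.
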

Somewhat surprisingly, the semi-positive definiteness of $\mathbf U$ readily leads to the semi-positive definiteness of $\mathbf F$.
Note that, however, this is only a sufficient condition in general. 
\begin{lem}[Semi-positive definiteness of $\mathbf F$]
\label{le2.3}
If $\mathbf U$ defined in \eqref{eq:defU} is semi-positive definite, then $\mathbf F$ is semi-positive definite.
\end{lem}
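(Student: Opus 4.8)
The plan is to read ``semi-positive definite'' in the quadratic-form sense, i.e. $\mathbf U$ semi-positive definite means $\mathbf x^{\mathrm T}\mathbf U\mathbf x\ge0$ for every real $\mathbf x\in\mathbb R^k$ (only the symmetric part of the upper-triangular $\mathbf U$ enters the form), and to deduce $\mathbf w^{\mathrm T}\mathbf F\mathbf w\ge0$ for every $\mathbf w\in\mathbb R^{k-1}$. First I would record the block decomposition that comes directly from \eqref{eq:defU}: writing $\mathbf y=(x_1,\dots,x_{k-1})^{\mathrm T}$ and $\mathbf z=(x_2,\dots,x_k)^{\mathrm T}$ for the leading and trailing $(k-1)$-windows of $\mathbf x$, one gets $\mathbf x^{\mathrm T}\mathbf U\mathbf x=x_1\sum_{j=1}^k a_{j-1}^{(k)}x_j-\mathbf y^{\mathrm T}\mathbf F\mathbf y+\mathbf z^{\mathrm T}\mathbf F\mathbf z$. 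The two sign-opposite copies of $\mathbf F$, evaluated on overlapping windows, are exactly the structure I intend to exploit.

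The key device is that every principal submatrix of a semi-positive-definite matrix is again semi-positive definite (restrict the test vector to vanish off the chosen indices). I would therefore focus on the trailing principal submatrices $\mathbf U^{[m]}$ of $\mathbf U$ on rows and columns $\{m,\dots,k\}$, for $m=2,\dots,k$. Because the $a^{(k)}$-row of $\mathbf U$ is deleted once $m\ge2$, a short bookkeeping of which entries $-f_{ij}$ (surviving only when $i,j\le k-1$) and $+f_{i-1,j-1}$ remain yields the clean identity
\[
\mathbf v^{\mathrm T}\mathbf U^{[m]}\mathbf v=(x_m,\dots,x_k)\,\mathbf F_{m-1}\,(x_m,\dots,x_k)^{\mathrm T}-(x_m,\dots,x_{k-1})\,\mathbf F_{m}\,(x_m,\dots,x_{k-1})^{\mathrm T},
\]
where $\mathbf v=(x_m,\dots,x_k)^{\mathrm T}$ and $\mathbf F_\ell$ denotes the bottom-right $(k-\ell)\times(k-\ell)$ principal block of $\mathbf F$ on indices $\{\ell,\dots,k-1\}$ (so $\mathbf F_1=\mathbf F$). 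In words, each trailing window of $\mathbf U$ is precisely the shifted difference of two consecutive nested blocks of $\mathbf F$.

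With this identity in hand I would run a downward induction along the nested chain $\mathbf F_{k-1}\subset\mathbf F_{k-2}\subset\cdots\subset\mathbf F_1=\mathbf F$. The base case is immediate: $\mathbf F_{k-1}=(f_{k-1,k-1})$, and $f_{k-1,k-1}$ is the $(k,k)$ diagonal entry of $\mathbf U$, which is nonnegative since diagonal entries of a semi-positive-definite matrix are nonnegative. For the inductive step, assume $\mathbf F_m$ is semi-positive definite. Semi-positive definiteness of the principal submatrix $\mathbf U^{[m]}$ gives $\mathbf v^{\mathrm T}\mathbf U^{[m]}\mathbf v\ge0$, so by the displayed identity the $\mathbf F_{m-1}$-form dominates the $\mathbf F_m$-form, and the latter is nonnegative by hypothesis; hence the $\mathbf F_{m-1}$-form is nonnegative for an arbitrary vector $(x_m,\dots,x_k)$, i.e. $\mathbf F_{m-1}$ is semi-positive definite. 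Descending to $m=2$ yields $\mathbf F_1=\mathbf F$ semi-positive definite, as claimed.

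The only delicate point — and the step I would be most careful about — is establishing the block-difference identity for $\mathbf U^{[m]}$ correctly at the right boundary, namely accounting for the fact that the $-f_{ij}$ contribution is present only when both $i,j\le k-1$ while the $+f_{i-1,j-1}$ contribution is present for all $i,j\ge m$. This asymmetry is exactly what makes the two forms live on windows of lengths $k-m+1$ and $k-m$ and thereby sets up the telescoping; everything else is the routine restriction of a quadratic form to a coordinate subspace. As a sanity check one can verify the identity for $k=3$, $m=2$, where $\mathbf v^{\mathrm T}\mathbf U^{[2]}\mathbf v=(f_{11}-f_{22})x_2^2+f_{12}x_2x_3+f_{22}x_3^2$, in agreement with the explicit $\mathbf U$ displayed above.
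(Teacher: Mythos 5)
Your proof is correct, and the telescoping identity at its heart is right: restricting the quadratic form of $\mathbf U$ to the coordinates $\{m,\dots,k\}$ kills the $a^{(k)}$-row and leaves exactly the difference of the $\mathbf F_{m-1}$-form on $(x_m,\dots,x_k)$ and the $\mathbf F_m$-form on $(x_m,\dots,x_{k-1})$, which sets up the downward induction cleanly (your $k=3$, $m=2$ sanity check agrees with the matrix displayed in the paper). The paper exploits the same structural fact --- that the trailing principal submatrices of $\mathbf U$ contain the two sign-opposite shifted copies of $\mathbf F$ --- but only works out $k=3$ explicitly, reading off $f_{22}\ge 0$ and the nonnegativity of the $2\times 2$ determinant of the trailing block of $\tfrac12(\mathbf U+\mathbf U^{\mathrm T})$ and then verifying Sylvester-type conditions for $\mathbf F$ by hand, with the remark that ``the case $k\ge 4$ follows along similar lines.'' Your quadratic-form induction is the argument that actually makes those ``similar lines'' precise for all $k$: it avoids determinant bookkeeping entirely (which would get unwieldy for $k=5,6$) and also sidesteps the minor edge case the paper glosses over (for $k=3$ one still needs $f_{11}\ge 0$, which in the determinant formulation requires a separate word when $f_{22}=0$, whereas in your formulation it is automatic since the $\mathbf F_{m-1}$-form dominates a nonnegative quantity). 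In short: same idea, but your execution is the complete general proof.
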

\begin{rem}
Here by semi-positive definiteness, we mean that 
\begin{equation}
\mathbf x^{\mathrm T} \mathbf U \mathbf x  \ge 0, \quad\forall~\mathbf x \in \mathbb R^k. 
\end{equation}
Clearly $\mathbf U$ is semi-positive definite $\Leftrightarrow$ $\frac12(\mathbf U + \mathbf U^{\mathrm T})$ is semi-positive definite. 
\end{rem}
\begin{proof}[Proof of Lemma \ref{le2.3}.]
We consider the case $k=3$. 
Note that 
\begin{equation}
\mathbf U= \left(\begin{array}{ccc}  a_0^{(3)}-f_{11} & a_1^{(3)}-f_{12} & a_{2}^{(3)} \\0 & -f_{22}+f_{11} & f_{12} \\0 & 0 & f_{22}\end{array}\right). 
\end{equation}
Since $\frac12(\mathbf U+\mathbf U^{\mathrm T})$ is semi-positive definite, we have 
\begin{equation}
f_{22}\ge 0,\quad  \mbox{det}\left|\begin{array}{cc} -f_{22}+f_{11}  & \frac{1}{2}f_{12} \\ \frac{1}{2}f_{12}& f_{22}\end{array}\right|\ge 0. 
\end{equation}
These imply that $\mathbf F$ is semi-positive definite. 
The case $k\ge 4$ follows along similar lines. 
\end{proof}

In the remainder of this section, we focus on 
\begin{align}
\notag
\boxed{\mbox{finding an upper triangular matrix $\mathbf U\in \mathbb R^{k\times k}$ satisfying both \eqref{eq:cond} and \eqref{eq:linsys}}.}
\end{align}
Note that the upper triangular matrix $\mathbf U$ has $\frac12 k(k+1)$ degrees of freedom, and we have to accommodate the inequality \eqref{eq:cond} together with $k$ equations \eqref{eq:linsys}.  In general, this is under-determined optimization problem. To simplify the analysis, we consider low rank  upper triangular $\mathbf U$ satisfying 
\begin{equation}\label{eq:Up}
\frac 1 2\left(\mathbf U + \mathbf U^{\mathrm T}\right) = \mathbf p\mathbf p^{\mathrm T}  + \alpha \mathbf e_1\mathbf e_1^{\mathrm T} 
\end{equation}
with prescribed $\mathbf p = (p_1,\ldots,p_k)^{\mathrm T}  $, $\alpha>0$ and $\mathbf e_1= (1,0,\ldots,0)^{\mathrm{T}}$. For given  $\mathbf p=(p_1,\cdots, p_k)^{\mathrm{T}}\in \mathbb{R}^k,~\alpha>0$, we define
\begin{equation}\label{eq:U_form}
\mathbf U_{ij} = 
\left\{
\begin{aligned}
& p_i^2+\alpha\delta_{i1}  && i=j,\\
& 2p_ip_j && i<j,\\
& 0 && i>j.
\end{aligned}
\right.
\end{equation}
Obviously, $\mathbf U+\mathbf U^{\mathrm T}$ has rank less than or equal to $2$ and $\mathbf U$ satisfies the positive definiteness property \eqref{eq:cond} with the same $\alpha$. The restriction \eqref{eq:linsys} imposes the following conditions on $\mathbf p\in\mathbb{R}^k$ and $\alpha>0$:
\begin{equation}\label{eq:nonlinsys}
\left\{
\begin{aligned}
& p_1^2 + \ldots+p_{k}^2 = a^{(k)}_0-\alpha,\quad (s=0);\\
& 2 \sum_{j=1}^{k-s} p_j p_{j+s}= a^{(k)}_s,\quad s= 1,\ldots,k-1.
\end{aligned}
\right.
\end{equation}
In yet other words, we have reduced the proof of energy dissipation to solving a  set of $k$ quadratic equations with $k+1$ unknowns! (The values of $a_s^{(k)}$ are specified in Table \ref{tab1}.)

For given $k=2,\cdots,6$, we define the following threshold:
\begin{equation}
\alpha^{(k)}_{\mathrm{max}} \coloneqq \sup_\alpha\left\{\alpha \in \mathbb R \mid \eqref{eq:nonlinsys} \mbox{ is solvable}\right\}.
\end{equation}
Summing all equations in \eqref{eq:nonlinsys} and using the fact $\sum\limits_{i=0}^{k-1}a_i^{(k)}=1$, we obtain
\begin{equation}\label{eq:lin1a}
\left(\sum_{i=1}^k p_i\right)^2 =1-\alpha.
\end{equation}
Similarly using alternating sum, we have
\begin{equation}\label{eq:lin2a}
\left(\sum_{i=1}^k (-1)^{i-1} p_i \right)^2= \sum_{i=0}^{k-1} (-1)^i a_i^{(k)}-\alpha.
\end{equation}
Note that \eqref{eq:lin1} implies $\alpha_{\mathrm{max}}^{(k)} \le 1$.
Theoretically speaking, it is best to take largest $\alpha$ in order to saturate the upper bound in \eqref{eq:restrict_tau}.

\begin{lem}[BDF2]
For the BDF2 scheme, $\alpha_{\mathrm{max}}^{(2)} = 1$ is reached when $\mathbf p= \left(\frac12,-\frac12\right)^{\mathrm T}$ in \eqref{eq:Up}.
\end{lem}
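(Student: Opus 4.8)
The plan is to exploit the two global relations \eqref{eq:lin1a} and \eqref{eq:lin2a}, which already encode everything needed in the two-step case. For $k=2$ the system \eqref{eq:nonlinsys} is simply
\begin{equation*}
p_1^2 + p_2^2 = a_0^{(2)} - \alpha = \tfrac32 - \alpha, \qquad 2 p_1 p_2 = a_1^{(2)} = -\tfrac12,
\end{equation*}
so I would first rephrase everything in terms of the symmetric combinations $p_1 + p_2$ and $p_1 - p_2$. Indeed \eqref{eq:lin1a} reads $(p_1+p_2)^2 = 1 - \alpha$, and since $\sum_{i=0}^{1}(-1)^i a_i^{(2)} = a_0^{(2)} - a_1^{(2)} = 2$, the alternating relation \eqref{eq:lin2a} reads $(p_1 - p_2)^2 = 2 - \alpha$.

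The upper bound is then immediate: the left-hand side of $(p_1+p_2)^2 = 1-\alpha$ is the square of a real number, hence nonnegative, which forces $\alpha \le 1$. As this holds for every real solution $\mathbf p$ of \eqref{eq:nonlinsys}, we conclude $\alpha_{\mathrm{max}}^{(2)} \le 1$.

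It remains to show that $\alpha = 1$ is actually attained. Setting $\alpha = 1$ in the two relations gives $(p_1+p_2)^2 = 0$ and $(p_1 - p_2)^2 = 1$, i.e. $p_2 = -p_1$ and $2|p_1| = 1$. Choosing $p_1 = \tfrac12$, $p_2 = -\tfrac12$ produces the candidate $\mathbf p = (\tfrac12, -\tfrac12)^{\mathrm T}$, and a direct substitution into \eqref{eq:nonlinsys} confirms $p_1^2 + p_2^2 = \tfrac12 = \tfrac32 - 1$ and $2 p_1 p_2 = -\tfrac12$. Hence the supremum defining $\alpha_{\mathrm{max}}^{(2)}$ is a maximum, equal to $1$ and realized by this $\mathbf p$.

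The argument involves no genuine obstacle: the only point requiring attention is to observe that $\alpha \le 1$ is forced by the squared quantity in \eqref{eq:lin1a} rather than by the original quadratic constraints, after which achievability is a one-line verification. By contrast, for $k \ge 3$ the two relations \eqref{eq:lin1a}--\eqref{eq:lin2a} no longer determine $\mathbf p$, and locating $\alpha_{\mathrm{max}}^{(k)}$ demands the more elaborate analysis developed in the sequel.
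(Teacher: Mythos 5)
Your proof is correct and is essentially the computation the paper leaves implicit (its proof is just ``Direct computation,'' and the bound $\alpha\le 1$ from $(p_1+p_2)^2=1-\alpha$ is already noted in the text preceding the lemma). Nothing further is needed.
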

\begin{proof}
Direct computation.
\end{proof}

\begin{lem}[BDF3]
For the BDF3 scheme, $\alpha_{\mathrm{max}}^{(3)} = \frac{95}{96}$ is reached when $\mathbf p = \left(\frac1{\sqrt6},-\frac{7}{4\sqrt6},\frac1{\sqrt6}\right)^{\mathrm T}$ in \eqref{eq:Up}.
\end{lem}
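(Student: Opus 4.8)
The plan is to specialize the quadratic system \eqref{eq:nonlinsys} to $k=3$ and reduce it to a single scalar condition in one variable, after which the optimization over $\alpha$ becomes transparent. Reading off $a_0^{(3)}=\tfrac{11}{6}$, $a_1^{(3)}=-\tfrac76$, $a_2^{(3)}=\tfrac13$ from Table \ref{tab1}, the system reads
\begin{equation}
p_1^2+p_2^2+p_3^2 = \tfrac{11}{6}-\alpha,\qquad 2p_2(p_1+p_3)=-\tfrac76,\qquad 2p_1p_3=\tfrac13 .
\end{equation}
First I would introduce the symmetric combination $S:=p_1+p_3$, noting that the third equation fixes $p_1p_3=\tfrac16$. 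Since the second equation forces $S\neq0$, it gives $p_2=-\tfrac{7}{12S}$. Writing $p_1^2+p_3^2=S^2-2p_1p_3=S^2-\tfrac13$ and substituting into the first equation yields, with $t:=S^2$, the single quadratic
\begin{equation}
t^2-\Bigl(\tfrac{13}{6}-\alpha\Bigr)t+\tfrac{49}{144}=0 .
\end{equation}
Thus solvability of \eqref{eq:nonlinsys} is equivalent to the existence of an \emph{admissible} root $t$ of this quadratic, meaning $t>0$ and, crucially, $t\ge\tfrac23$, the latter being exactly the discriminant condition $S^2-4p_1p_3\ge0$ that makes $p_1,p_3$ real.

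Next I would maximize $\alpha$ subject to these admissibility constraints, and this is where the main subtlety lies. The naive requirement that the quadratic in $t$ have real roots gives $(\tfrac{13}{6}-\alpha)^2\ge\tfrac{49}{36}$, hence $\alpha\le1$, which merely recovers the bound already implied by \eqref{eq:lin1a}. The key point — and the main obstacle — is that this bound is \emph{not} attained: at $\alpha=1$ the quadratic has the double root $t=\tfrac{7}{12}<\tfrac23$, so $p_1,p_3$ become complex. One must therefore track the larger root
\begin{equation}
t_+(\alpha)=\tfrac12\Bigl(\bigl(\tfrac{13}{6}-\alpha\bigr)+\sqrt{\bigl(\tfrac{13}{6}-\alpha\bigr)^2-\tfrac{49}{36}}\Bigr)
\end{equation}
and observe that it is continuous and strictly decreasing in $\alpha$, with $t_+(1)=\tfrac{7}{12}$. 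Consequently it is the reality constraint $t\ge\tfrac23$, rather than the discriminant of the $t$-quadratic, that is the binding one, and the supremum is reached exactly when $t_+=\tfrac23$.

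Finally, I would pin down the extremal data by imposing $t=\tfrac23$ in the quadratic:
\begin{equation}
\tfrac49-\Bigl(\tfrac{13}{6}-\alpha\Bigr)\tfrac23+\tfrac{49}{144}=0
\ \Longrightarrow\ \alpha=\tfrac{95}{96}.
\end{equation}
At this value $S^2=\tfrac23$ saturates $S^2=4p_1p_3$, forcing $p_1=p_3=\tfrac12S=\tfrac1{\sqrt6}$, and then $p_2=-\tfrac{7}{12S}=-\tfrac{7}{4\sqrt6}$, giving $\mathbf p=(\tfrac1{\sqrt6},-\tfrac{7}{4\sqrt6},\tfrac1{\sqrt6})^{\mathrm T}$ as asserted. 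It then remains only to verify directly that this $\mathbf p$ together with $\alpha=\tfrac{95}{96}$ satisfies all three equations of \eqref{eq:nonlinsys}, and to dispose of the spurious branch $\tfrac{13}{6}-\alpha\le-\tfrac76$ (that is $\alpha\ge\tfrac{10}{3}$), which is excluded since there both roots of the $t$-quadratic are negative and hence inadmissible.
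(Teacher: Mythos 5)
Your argument is correct, and it reaches the paper's conclusion by a route that differs in a useful way. The paper first forms the sum and alternating-sum identities \eqref{eq:lin1} and \eqref{eq:lin2}, which express $p_2=\frac12\bigl((1-\alpha)^{1/2}-(\tfrac{10}{3}-\alpha)^{1/2}\bigr)$ and $p_1+p_3=\frac12\bigl((1-\alpha)^{1/2}+(\tfrac{10}{3}-\alpha)^{1/2}\bigr)$, and then invokes the same reality condition you use, $p_1p_3\le\frac14(p_1+p_3)^2$, to get $\alpha\le\frac{95}{96}$; the price of that parametrization is a case split on the relative signs of the two square roots, and the paper explicitly omits the verification that the opposite-sign branch cannot beat $\frac{95}{96}$. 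Your elimination of $p_2$ via $p_2=-\tfrac{7}{12S}$ and reduction to the quadratic $t^2-(\tfrac{13}{6}-\alpha)t+\tfrac{49}{144}=0$ in $t=S^2$ parametrizes \emph{all} real solutions at once (the sign of $S$ is irrelevant since only $S^2$ and $7/(12S)\cdot S$-type combinations enter), so the case analysis disappears and the argument is complete where the paper's is only sketched. You also correctly identify the genuinely binding constraint: the discriminant of the $t$-quadratic only reproduces $\alpha\le1$, and it is the reality condition $t\ge\tfrac23$ on $p_1,p_3$, applied to the decreasing larger root $t_+(\alpha)$, that yields the sharp threshold $\tfrac{95}{96}$ — this matches the paper's observation that $\alpha^{(3)}_{\max}<1$ strictly. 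A marginally slicker finish along your lines: on the admissible set $t\ge\tfrac23$ one has $\alpha=\tfrac{13}{6}-t-\tfrac{49}{144t}$, and the function $t\mapsto t+\tfrac{49}{144t}$ is increasing there, so $\alpha$ is maximized exactly at $t=\tfrac23$; but your monotone-root argument is equally rigorous. The dismissal of the branch $\alpha\ge\tfrac{10}{3}$ (both roots negative) is also correct.
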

\begin{proof}
We first rewrite \eqref{eq:lin1a} and \eqref{eq:lin2a} as
\begin{align}
&\sum_{i=1}^k p_i=\pm(1-\alpha)^{\frac12}, \label{eq:lin1}\\
&\sum_{i=1}^k (-1)^{i-1} p_i = \pm \left(\sum_{i=0}^{k-1} (-1)^i a_i^{(k)}-\alpha\right)^{\frac12}.\label{eq:lin2}
\end{align}
Note that if $\mathbf p$ is a solution to \eqref{eq:nonlinsys}, then $-\mathbf p$ is also a solution. 
This implies that we should consider two cases: the right-hand sides of \eqref{eq:lin1} and \eqref{eq:lin2} have the same sign or the opposite sign. 

From \eqref{eq:nonlinsys}, \eqref{eq:lin1}, and \eqref{eq:lin2}, we consider the case when the right-hand sides of \eqref{eq:lin1} and \eqref{eq:lin2} have same sign:
\begin{equation}
\left\{
\begin{aligned}
p_1+p_2+p_3 & = (1-\alpha)^{\frac12},\\
p_1-p_2+p_3 & = \left(\frac{10}{3}-\alpha\right)^{\frac12},\\
p_1p_3 & = \frac{1}{6},
\end{aligned}
\right.
\end{equation}
which yields
\begin{equation}
\begin{aligned}
& p_2  = \frac12\left((1-\alpha)^{\frac12}-\left(\frac{10}{3}-\alpha\right)^{\frac12}\right),\\
& p_1+p_3  = \frac12\left((1-\alpha)^{\frac12}+\left(\frac{10}{3}-\alpha\right)^{\frac12}\right),\quad
p_1p_3  = \frac16.
\end{aligned}
\end{equation}
Thus the above system is solvable if and only if $p_1p_3 \le \frac14(p_1+p_3)^2$, i.e.,
\begin{equation}\alpha \le \alpha_{\mathrm{max}}^{(3)} = \frac{95}{96}.\end{equation}
When $\alpha = \frac{95}{96}$, we can obtain $\mathbf p = \left(\frac1{\sqrt6},-\frac{7}{4\sqrt6},\frac1{\sqrt6}\right)^{\mathrm T}$.

In addition, it is not difficult to check that in the case when the right-hand sides of \eqref{eq:lin1} and \eqref{eq:lin2} have opposite signs, $\alpha$ can not reach $\frac{95}{96}$. 
We omit this computation here. 
\end{proof}


\begin{lem}[BDF4]
For the BDF4 scheme,  $\alpha_{\mathrm{max}}^{(4)}\approx 0.814139$, when
\begin{equation}
{\mathbf p}\approx (-0.223519,  0.719240,  -0.623843,   0.559237)^{\mathrm T}
\end{equation}
 in \eqref{eq:Up}.
\end{lem}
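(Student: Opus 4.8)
The plan is to recast the solvability question for the quadratic system \eqref{eq:nonlinsys} (with $k=4$) as a single algebraic condition on $\alpha$, in the same spirit as the BDF3 lemma but one degree higher. Writing out \eqref{eq:nonlinsys} for $k=4$ and reading off Table \ref{tab1}, the three off-diagonal equations are $2p_1p_4=-\tfrac14$, $2(p_1p_3+p_2p_4)=\tfrac{13}{12}$, $2(p_1p_2+p_2p_3+p_3p_4)=-\tfrac{23}{12}$, while the diagonal equation $p_1^2+p_2^2+p_3^2+p_4^2=\tfrac{25}{12}-\alpha$ merely records that maximizing $\alpha$ is the same as minimizing $\|\mathbf p\|^2$ subject to the first three constraints. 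I would keep $\alpha$ as the unknown to be pushed as large as possible and invoke the diagonal equation only at the end.

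First I would cut down the number of genuinely nonlinear unknowns. As in the derivation of \eqref{eq:lin1a}--\eqref{eq:lin2a}, summing and alternating-summing all four equations gives $(p_1+p_2+p_3+p_4)^2=1-\alpha$ and $(p_1-p_2+p_3-p_4)^2=\tfrac{16}{3}-\alpha$. Fixing a sign branch as in \eqref{eq:lin1}--\eqref{eq:lin2}, set $S=\pm\sqrt{1-\alpha}$ and $D=\pm\sqrt{\tfrac{16}{3}-\alpha}$, so that $A:=p_1+p_3=\tfrac{S+D}{2}$ and $B:=p_2+p_4=\tfrac{S-D}{2}$ become explicit functions of $\alpha$. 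One checks that the full system \eqref{eq:nonlinsys} is equivalent to the four relations $p_1+p_3=A$, $p_2+p_4=B$, $p_1p_3+p_2p_4=\tfrac{13}{24}$, $p_1p_4=-\tfrac18$ (the equations $s=0,1$ are then recovered automatically from the sum/alternating-sum identities). Parametrizing by $p_1$ via $p_3=A-p_1$, $p_4=-\tfrac{1}{8p_1}$, $p_2=B-p_4$ and substituting into $p_1p_3+p_2p_4=\tfrac{13}{24}$ collapses everything to the single quartic
\begin{equation}\label{eq:bdf4quartic}
64\,p_1^4 - 64 A\,p_1^3 + \tfrac{104}{3}\,p_1^2 + 8 B\,p_1 + 1 = 0 .
\end{equation}

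The key observation is a sign analysis of \eqref{eq:bdf4quartic}: its leading coefficient is positive and its value at $p_1=0$ equals $1>0$, so it admits a real root — equivalently \eqref{eq:nonlinsys} is solvable in $\mathbb R^4$ — if and only if its global minimum over $\mathbb R$ is nonpositive. Hence $\alpha_{\mathrm{max}}^{(4)}$ is exactly the largest $\alpha$, taken over the two admissible sign branches for $S,D$, at which this minimum equals zero, i.e. at which \eqref{eq:bdf4quartic} acquires a real double root. I would enforce this by requiring \eqref{eq:bdf4quartic} and its derivative $256p_1^3-192Ap_1^2+\tfrac{208}{3}p_1+8B=0$ to share a root; eliminating $p_1$ via a resultant yields one polynomial equation in $\alpha$ through the $\alpha$-dependence of $A,B$. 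Solving this numerically on the relevant branch produces $\alpha_{\mathrm{max}}^{(4)}\approx 0.814139$, and back-substitution recovers the double root $p_1\approx-0.223519$ together with $p_3,p_4,p_2$, giving the stated $\mathbf p$; a direct check that this $\mathbf p$ satisfies \eqref{eq:nonlinsys} closes the construction.

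The main obstacle is twofold. Algebraically, unlike BDF3 the optimal $\mathbf p$ no longer respects the reversal symmetry $(p_1,p_2,p_3,p_4)\mapsto(p_4,p_3,p_2,p_1)$ that leaves all constraints invariant, so one cannot retreat to a symmetric ansatz and must genuinely eliminate to reach \eqref{eq:bdf4quartic} and then the resultant in $\alpha$; the threshold is not a pleasant radical, which is precisely why the value is reported numerically. Conceptually, one must argue that the double-root value is a true global maximum of $\alpha$ rather than a spurious fold or a merely local extremum: this requires verifying that for $\alpha$ slightly above the threshold the quartic \eqref{eq:bdf4quartic} stays strictly positive on both sign branches (so no real $\mathbf p$ exists), and that the surviving branch genuinely yields real $p_2,p_3,p_4$. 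Establishing these reality and dominance facts across the two sign cases, as already foreshadowed in the omitted part of the BDF3 proof, is the step that demands the most care.
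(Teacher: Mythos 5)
Your proposal is correct, and it reaches the stated value by a genuinely different reduction than the paper's. The paper substitutes the ratios $a=p_2/p_1$, $b=p_3/p_2$, $c=p_4/p_3$, uses the three lower equations of \eqref{2.40} to solve $c=-3/(13+3a)$ and $b$ (with a $\pm$ branch from a degree-four discriminant in $a$) explicitly in terms of $a$, and then numerically maximizes the resulting one-parameter function $\alpha(a)$ over the admissible $a$-ranges determined by reality of $b$ and the sign constraint $abc<0$ (this is what Figure \ref{fig:alpha_a} displays). You instead push the BDF3 argument one degree higher: the sum and alternating-sum identities pin $p_1+p_3$ and $p_2+p_4$ as explicit functions of $\alpha$ on each sign branch, the remaining constraints $p_1p_4=-\tfrac18$ and $p_1p_3+p_2p_4=\tfrac{13}{24}$ collapse to the quartic $64p_1^4-64Ap_1^3+\tfrac{104}{3}p_1^2+8Bp_1+1=0$ (I checked this derivation, the equivalence of your four relations with \eqref{2.40}, and that the stated $\mathbf p$ satisfies both the quartic and the original system to the displayed precision), and $\alpha_{\mathrm{max}}^{(4)}$ is then the largest $\alpha$ at which this quartic acquires a real double root, found via a resultant. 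Your route has the advantage of a clean algebraic characterization of the threshold (a discriminant-vanishing condition, directly generalizing the BDF3 criterion $p_1p_3\le\tfrac14(p_1+p_3)^2$) and avoids the implicit nonvanishing assumptions on $p_2,p_3$ hidden in the ratio substitution; the paper's route produces an explicit curve $\alpha(a)$ whose global maximum can be exhibited graphically. The one step you must not skimp on is the one you flag yourself: confirming by the continuity argument that at the double-root value the quartic is strictly positive on \emph{both} sign branches for all larger $\alpha\le 1$, so that the fold is the true supremum rather than a local one; with that check in place the argument is complete, and like the paper's it is ultimately numerical in its final evaluation.
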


\begin{proof}
When $k = 4$, \eqref{eq:nonlinsys} can be written explicitly as
\begin{equation}\label{2.40}
\begin{cases}
p_1^2+p_2^2+p_3^2+p_4^2 = \frac{25}{12}-\alpha;\\
p_1p_2+p_2p_3+p_3p_4 =-\frac{23}{24};\\
p_1p_3+p_2p_4 = \frac{13}{24};\\
p_1p_4 =-\frac18.
\end{cases}
\end{equation}
Let $a=\dfrac{p_2}{p_1},~b=\dfrac{p_3}{p_2},~c=\dfrac{p_4}{p_3}$. 
Clearly
\begin{equation}
abc = \frac{p_4}{p_1}<0.
\end{equation}
On the other hand, if $a,b,c\in \mathbb R$ are given satisfying $abc<0$, then $\mathbf p$ are solvable:
\begin{equation}\label{2.42}
p_4 = \pm\left({-\frac18 abc}\right)^{\frac12}, ~p_3 = \frac{p_4}{c},~p_2 = \frac{p_3}{b},~p_1 = \frac{p_2}{a}.
\end{equation}
Substituting \eqref{2.42} with $p_4 = \left({-\frac18 abc}\right)^{\frac12}$  into \eqref{2.40}, we get
\begin{equation}
\label{eq:abc}
\begin{cases}
\alpha=\dfrac{25}{12}+\dfrac18\left(\dfrac{1}{abc}+\dfrac{a}{bc}+\dfrac{ab}{c}+abc\right),\\
\\	
\dfrac{1}{bc} + \dfrac{a}{c} + ab=\dfrac{23}{3},\\
\\
\dfrac{1}{c} + a=-\dfrac{13}{3}.
\end{cases}
\end{equation}
A simple computation yields
\begin{equation}
\begin{aligned}
b=\frac{23 + 13 a + 3 a^2 \pm (529 + 754 a + 343 a^2 + 78 a^3 + 9 a^4)^{\frac12}}{6 a}\quad \mathrm{and}\quad c=-\dfrac{3}{13+3a}.
\end{aligned}
\end{equation}
Here, $b\in \mathbb R$ if and only if 
$
529 + 754 a + 343 a^2 + 78 a^3 + 9 a^4 \ge 0,
$
i.e., 
\begin{equation}
a \le \frac16 \left(-13 - \sqrt{-179 + 24 \sqrt{78}}\right) =:a_{\mathrm L} \quad\mbox{or}\quad a \ge \frac16 \left(-13 + \sqrt{-179 + 24 \sqrt{78}}\right)=:a_{\mathrm R} .
\end{equation}
Moreover, the restriction $abc<0$ forces
\begin{equation}
\frac{23 + 13 a + 3 a^2 \pm (529 + 754 a + 343 a^2 + 78 a^3 + 9 a^4)^{\frac12}}{13+3a}>0.
\end{equation}
Collecting the estimates, we obtain two families of solutions:
\begin{equation}
\begin{aligned}
& a \in \left(-{13}/{3},a_{\mathrm L}\right]\cup \left[a_{\mathrm R},\infty\right),\\
& b=\frac{23 + 13 a + 3 a^2 + (529 + 754 a + 343 a^2 + 78 a^3 + 9 a^4)^{\frac12}}{6 a},
\end{aligned}
\end{equation}
or
\begin{equation}
\begin{aligned}
& a \in \left(-\infty,-13/3\right)\cup\left(-13/3,a_{\mathrm L}\right] \cup \left(a_{\mathrm R},0\right),\\
& b=\frac{23 + 13 a + 3 a^2 - (529 + 754 a + 343 a^2 + 78 a^3 + 9 a^4)^{\frac12}}{6 a}.
\end{aligned}
\end{equation}
The main task now is to find the maximal $\alpha=\alpha(a)$ in \eqref{eq:abc}. 
In Figure \ref{fig:alpha_a}, we plot $\alpha$ w.r.t. $a$ corresponding to the above two families of solutions. 
Rigorous numerical computation leads to the maximum value  $\alpha_{\mathrm{max}}^{(4)}\approx 0.814139$, when
\begin{equation*}
{\mathbf p}\approx (-0.223519,  0.719240,  -0.623843,   0.559237)^{\mathrm T}.
\end{equation*}
\begin{figure}
\includegraphics[trim={1.5in 0 1.5in 0},clip,width = 1\textwidth]{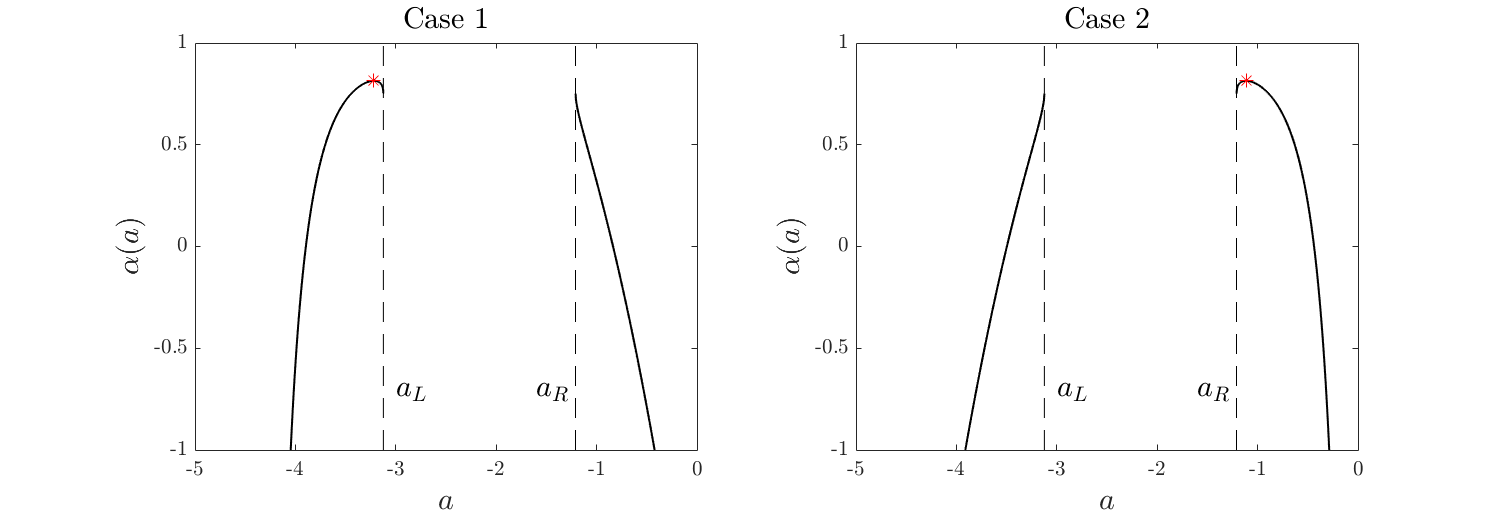}
\caption{$\alpha$ w.r.t. $a$ for BDF4 scheme. The red star marker denotes the maximum value of $\alpha$.}\label{fig:alpha_a}
\end{figure}
\end{proof}

\begin{lem}[BDF5]
For the BDF5 scheme,  $\alpha_{\mathrm {max}}^{(5)} \approx 0.185545$, when
\begin{equation}
\mathbf p \approx (0.868686,  -0.448459,   0.912060,  -0.544932, 0.115116)^{\mathrm T}
\end{equation}
 in \eqref{eq:Up}.
\end{lem}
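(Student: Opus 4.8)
The plan is to follow the same reduction strategy used in the BDF4 case, introducing ratio variables to collapse the quadratic system \eqref{eq:nonlinsys} to a one-parameter optimization. First I would specialize \eqref{eq:nonlinsys} to $k=5$ using the coefficients $a_0^{(5)}=\frac{137}{60}$, $a_1^{(5)}=-\frac{163}{60}$, $a_2^{(5)}=\frac{137}{60}$, $a_3^{(5)}=-\frac{21}{20}$, $a_4^{(5)}=\frac15$ from Table \ref{tab1}, obtaining five equations in the six unknowns $p_1,\dots,p_5,\alpha$. The corner equation ($s=4$) reads $p_1p_5=\frac1{10}>0$, so $p_1$ and $p_5$ must share a sign; this is the analogue of the constraint $p_1p_4=-\frac18$ that drove the BDF4 argument.

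Next I would set $a=p_2/p_1$, $b=p_3/p_2$, $c=p_4/p_3$, $d=p_5/p_4$, so that $abcd=p_5/p_1>0$ and $p_1^2=\frac1{10\,abcd}$, which determines every $p_i$ (up to the global sign $\mathbf p\mapsto-\mathbf p$) in terms of $(a,b,c,d)$. Substituting into the $s=1,2,3$ equations and clearing the common factor $p_1^2$, the $s=3$ equation collapses to a relation involving $a$ and $d$ alone, giving $d$ as the explicit rational function $d=-\frac{4}{4a+21}$. The $s=2$ and $s=1$ equations then become two equations in $(a,b,c)$; solving the $s=2$ equation for one of $b,c$ and substituting into the $s=1$ equation reduces the whole system to a single rational equation determining $b$ and $c$ as branch-valued functions of the single parameter $a$, exactly as the auxiliary system \eqref{eq:abc} did for BDF4. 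Along the way I would record the reality (discriminant $\ge 0$) and sign ($abcd>0$) conditions that carve out the admissible intervals of $a$.

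Finally, feeding these back into the $s=0$ equation expresses $\alpha=\alpha(a)$ as an explicit one-variable function on the admissible set, and maximizing it yields $\alpha_{\max}^{(5)}\approx 0.185545$ together with the stated $\mathbf p$; one checks a posteriori that $p_1p_5\approx\frac1{10}$ and $d=p_5/p_4$ are consistent with the closed-form relation. The main obstacle is that, unlike the clean closed form $\frac{95}{96}$ available for BDF3, the elimination for $k=5$ produces higher-degree polynomial relations whose discriminant and sign constraints split the parameter range into several sub-intervals; isolating $\alpha$ as a genuine function of one variable and certifying the global maximum therefore cannot be carried out in closed form and must instead rest on rigorous numerical computation (e.g.\ interval arithmetic), just as in the BDF4 case.
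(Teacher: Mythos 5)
Your proposal follows essentially the same route as the paper: introduce the ratios $a=p_2/p_1$, $b=p_3/p_2$, $c=p_4/p_3$, $d=p_5/p_4$ with $abcd=p_5/p_1>0$ fixed by the corner equation $p_1p_5=\tfrac1{10}$, eliminate $d=-\tfrac{4}{4a+21}$ from the $s=3$ equation and $c$ from the $s=2$ equation, reduce $b$ to a root of a one-parameter cubic in $a$ subject to the sign constraints, and then maximize the resulting $\alpha(a)$ numerically. This matches the paper's derivation of \eqref{eq:abcd}, \eqref{eq:cd}, \eqref{eq:ab_cubic} and \eqref{eq:ab_restrict}, including the reliance on rigorous numerical computation for the final maximum.
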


\begin{proof}
When $k = 5$, \eqref{eq:nonlinsys} can be written explicitly as
\begin{equation}\label{2.50}
\begin{cases}
p_1^2+p_2^2+p_3^2+p_4^2 + p_5^2= \frac{137}{60}-\alpha;\\
p_1p_2+p_2p_3+p_3p_4 + p_4p_5=-\frac{163}{120};\\
p_1p_3+p_2p_4 + p_3p_5 = \frac{137}{120};\\
p_1p_4 + p_2p_5=-\frac{21}{40};\\
p_1p_5 = \frac1{10}.
\end{cases}
\end{equation}
Let $a=\dfrac{p_2}{p_1},~b=\dfrac{p_3}{p_2},~c=\dfrac{p_4}{p_3},~d = \dfrac{p_5}{p_4}$. 
Clearly
\begin{equation}
abcd = \frac{p_5}{p_1}>0.
\end{equation}
On the other hand, if $a,b,c,d\in \mathbb R$ are given satisfying $abcd>0$, then $\mathbf p$ are solvable:
\begin{equation}
p_5 = \pm\left({\frac1{10} abcd}\right)^{\frac12},~p_4 = \frac{p_5}{d}, ~p_3 = \frac{p_4}{c},~p_2 = \frac{p_3}{b},~p_1 = \frac{p_2}{a}.
\end{equation}
The simplified system becomes
\begin{equation}
\label{eq:abcd}
\begin{cases}
\alpha=\dfrac{137}{60}-\dfrac1{10}\left(\dfrac{1}{abcd}+\dfrac{a}{bcd}+\dfrac{ab}{cd}+\dfrac{abc}{d}+abcd\right),\\
\\	
\dfrac{1}{bcd} + \dfrac{a}{cd} + \dfrac{ab}{d} + abc=-\dfrac{163}{12},\\
\\
\dfrac{1}{cd} + \dfrac{a}{d}+ab=\dfrac{137}{12},\\
\\
\dfrac{1}{d} +a=-\dfrac{21}{4}.
\end{cases}
\end{equation}
Clearly for $a\ne -\frac{21}{4}$,
\begin{equation}\label{eq:cd}
\begin{aligned}
d = \left(-\dfrac{21}{4}-a\right)^{-1},\quad c = \frac{-\frac{21}{4}-a}{\frac{137}{12}+a^2+\frac{21}{4}a-ab},
\end{aligned}
\end{equation}
and $b$ satisfies a cubic equation:
\begin{equation}\label{eq:ab_cubic}
\begin{aligned}
&b\left(\frac{163}{12}-\left(\frac{21}{4}+a\right)ab\right) \left(\frac{137}{12}+a^2+\frac{21}{4}a-ab\right)\\
&+(1+ab)\left(\frac{137}{12}+a^2+\frac{21}{4}a-ab\right)^2 
-\left(\frac{21}{4}+a\right)ab^2 = 0.
\end{aligned}
\end{equation}
For fixed $a$, this cubic equation in $b$ has one or three real roots. 
Since
\begin{equation}
abcd = \frac{ab}{\frac{137}{12}+a^2+\frac{21}{4}a-ab}>0,
\end{equation}
we must impose (note that the case $ab<0$ is excluded since $\frac{137}{12}+a^2+\frac{21}{4}a>0$)
\begin{equation}\label{eq:ab_restrict}
\begin{aligned}
ab>0\quad\mathrm{and}\quad
\frac{137}{12}+a^2+\frac{21}{4}a-ab>0.
\end{aligned}
\end{equation}
Regarding $a$ as a parameter, $b$ is obtained by solving the cubic equation \eqref{eq:ab_cubic} together with the constraint \eqref{eq:ab_restrict}.
The other two variables $c$ and $d$ are computed via \eqref{eq:cd}.
The governing variable $\alpha(a)$ can be computed from the first equation in \eqref{eq:abcd}.
A rigorous numerical computation gives
$\alpha_{\mathrm {max}}^{(5)} \approx 0.185545$ with
\begin{equation}
\mathbf p \approx (0.868686,  -0.448459,   0.912060,  -0.544932, 0.115116)^{\mathrm T}.
\end{equation}
\end{proof}

\begin{rem}
Preliminary numerical experiments suggest that for BDF6, $\alpha_{\mathrm{max}}^{(6)}<0$. 
An interesting further issue is to determine the corresponding threshold for higher rank matrices. 
However we will not dwell on this subtle technicality here. 
\end{rem}

For readers' convenience we summarize the main results obtained in this section in Table \ref{tab2}.

\begin{table}[!]
\renewcommand\arraystretch{1.3}
\begin{center}
\def\temptablewidth{0.95\textwidth}
\caption{Optimal $\mathbf F$ in the energy \eqref{eq:modenergy} corresponding to $\alpha_{\mathrm{max}}^{(k)}$ and $\beta_{\mathrm{max}}^{(k)}$ in \eqref{eq:restrict_tau} for the BDF$k$/EP$k$ scheme \eqref{eq:genBDFk} of 2D MBE-NSS model. If the time step $\tau \le\beta_{\mathrm{max}}^{(k)}\nu$, the $\mathbf F$-modulated energy decays w.r.t. time. }\vspace{-0.2in}\label{tab2}
{\rule{\temptablewidth}{1pt}}
\begin{tabular*}{\temptablewidth}{@{\extracolsep{\fill}}cccc}
   BDF$k$      &$\mathbf F$     &$\alpha_{\mathrm{max}}^{(k)}$     &$\beta_{\mathrm{max}}^{(k)}$ \\ \hline 
 $k=2$   & $\dfrac14$   &$1$  & $\frac{512}{289}$ \\[10pt]
  $k=3$   &$\left(\begin{array}{rr} \frac{65}{96}  & -\frac{7}{12}  \\  & \frac16\end{array}\right)$  &$\frac{95}{96}$   &$\frac{1520 }{7203} $   \\[10pt]
  $k=4$   & $\left(\begin{array}{rrr} 1.219233& -1.595139&  0.804452 \\ & 0.701927& -0.697753 \\  &  & 0.312746\end{array}\right) $ &$0.814139$ &$0.032644$ \\[10pt]
  $k=5$   & $\left(\begin{array}{rrrr}1.343172& -1.937526& 0.698746&   -0.10325 \\ & 1.142056 & -1.119483 &   0.209986 \\ &  & 0.310203& -0.125461 \\ &  &  & 0.013251\end{array}\right)$ &$0.185545$ & $0.001635$
  \end{tabular*}
{\rule{\temptablewidth}{1pt}}
\end{center}
\end{table}


\section{Uniform boundedness of energy for any $\tau>0$}\label{sect3}
In this section, we consider the MBE-NSS equation defined in $\mathbb T^2 \coloneqq [0,1]^2$. 
Clearly, the average height is conserved in time, i.e.
\begin{equation}
\int_{\mathbb{T}^2}u(t,x) dx =  \int_{\mathbb{T}^2}u_0(x) dx, \qquad\forall\, t>0.
\end{equation}

From the energy dissipation analysis in Section \ref{sect2}, we have already established the $H^2$ bound of $u^n$ for the BDF$k$ scheme  when 
$
0< \tau  \le \tau_0 = \beta_{\mathrm{max}}^{(k)} \nu$, 
So we only consider the case of $\tau>\tau_0$ in what follows.

\begin{thm}[Uniform boundedness of energy for arbitrary time step]
\label{th3.1}
Consider the scheme \eqref{eq:genBDFk}. Assume $u^0,\cdots,u^{k-1}\in H^2(\mathbb T^2)$ satisfy (below recall $\delta u^i = u^i-u^{i-1}$)
\begin{itemize}
\item
$
\displaystyle\int_{\mathbb T^2}u^{k-1}dx=\cdots=\int_{\mathbb{T}^2}u^0dx,
$
\item
$
\displaystyle\sum_{i=1}^{k-1}\|\delta u^i\|_2^2 \le\alpha_k\tau,
$
where $\alpha_k>0$ is some constant. 
\end{itemize}
Then we have the following uniform $H^2$ bound on all numerical iterates: 
\begin{equation}
\sup_{n \ge 0}\left(\|u^n\|_2+\|\Delta u^n\|_2\right) \le B_k<+\infty,
\end{equation}	
where $B_k>0$ depends only on $(h^0,\cdots,h^{k-1},\varepsilon,\alpha_k)$. In particular $B_k$ does not depend on $\tau.$
\end{thm}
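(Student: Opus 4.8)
The plan is to treat only the regime $\tau>\tau_0:=\beta_{\max}^{(k)}\nu$, since Section~\ref{sect2} already yields a uniform $H^2$ bound (through monotone decay of $E^n_{\mathbf F,k}$) when $0<\tau\le\tau_0$. First I would reduce the statement to a uniform bound on $\|\Delta u^n\|_2$: the hypothesis that all iterates share the same mean lets me subtract the (conserved) average and work with mean-zero functions, for which the Poincaré inequality gives $\|u^n\|_2\le(2\pi)^{-2}\|\Delta u^n\|_2$ and $\|\nabla u^n\|_2\le(2\pi)^{-1}\|\Delta u^n\|_2$. The decisive structural input—stronger than the Lipschitz bound of Lemma~\ref{lem1}—is that the nonlinearity is pointwise bounded, $|f(z)|=|z|/(1+|z|^2)\le\tfrac12$, so that $g^{n+1}:=f\bigl(\nabla u^n+\sum_{i=1}^{k-1}b_i^{(k)}\nabla\delta u^{n+1-i}\bigr)$ satisfies $\|g^{n+1}\|_2\le\tfrac12|\mathbb T^2|^{1/2}$ for every choice of iterates and every $\tau$; thus $\nabla\cdot g^{n+1}$ is a forcing term bounded in $H^{-1}$ uniformly in $n$ and $\tau$.

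Next I would exploit a backward-Euler-type contraction hidden in the scheme. Writing \eqref{eq:genBDFk2} as the elliptic identity
\[
\Bigl(\tfrac{a_0^{(k)}}{\tau}+\nu\Delta^2\Bigr)u^{n+1}=\tfrac{a_0^{(k)}}{\tau}u^n-\tfrac1\tau\sum_{i=1}^{k-1}a_i^{(k)}\delta u^{n+1-i}+\nabla\cdot g^{n+1},
\]
the operator on the left is coercive and, on each nonzero Fourier mode $\xi$, its inverse multiplies by $\theta_\xi=\tfrac{a_0^{(k)}/\tau}{a_0^{(k)}/\tau+\nu|\xi|^4}$, with $\sup_{\xi\ne0}\theta_\xi=:\theta<1$ uniformly for $\tau\ge\tau_0$ (because $|\xi|\ge2\pi$). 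Since the biharmonic multiplier obeys $\tfrac{|\xi|^2}{a_0^{(k)}/\tau+\nu|\xi|^4}\le\tfrac1{\nu|\xi|^2}$, the forcing $\nabla\cdot g^{n+1}$ contributes to $\|\Delta u^{n+1}\|_2$ an amount controlled by $\tfrac{1}{2\pi\nu}\|g^{n+1}\|_2$, which is bounded independently of $\tau$. This produces a one-step estimate of the schematic form $\|\Delta u^{n+1}\|_2\le\theta\|\Delta u^n\|_2+(\text{multistep feedback})+C\nu^{-1}$, in which only the feedback term $\tfrac1\tau\sum_i a_i^{(k)}\delta u^{n+1-i}$ resists immediate control.

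To close the recursion I would track the increments simultaneously with $\|\Delta u^n\|_2$. Concretely I would propagate, by induction on $n$, a pair of bounds: a $\tau$-independent bound on $\|\Delta u^n\|_2$ and a bound of the form $\sum_j\|\delta u^{n-j}\|_2^2\lesssim\alpha_k\tau$, the latter seeded exactly by the hypothesis $\sum_{i=1}^{k-1}\|\delta u^i\|_2^2\le\alpha_k\tau$. The favorable factor $1/\tau$ multiplying the feedback then combines with the increment bound $O(\sqrt{\alpha_k\tau})$ to make the feedback a genuinely bounded, $\tau$-independent perturbation, after which the geometric contraction by $\theta$ and the $O(\nu^{-1})$ forcing close the estimate by discrete Gronwall. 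An equivalent and perhaps cleaner route is to build a Lyapunov functional marrying $\|\Delta u^n\|_2^2$ with a $\tau$-weighted increment energy $\tfrac1\tau\langle\mathbf G v_n,v_n\rangle$ in the spirit of the $\mathbf F$-modulated energy of Definition~\ref{def:energy}, and to show it obeys a difference inequality whose unfavorable terms are absorbed by the biharmonic smoothing $\nu\|\Delta\delta u^{n+1}\|_2^2$ together with the $1/\tau$ gain.

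The hard part will be the feedback/increment interaction for $k\ge3$. Unlike the dissipation analysis of Section~\ref{sect2}, here the stabilizing quantity $\tfrac{\alpha}{\tau}\|\delta u^{n+1}\|_2^2$ is too weak—it carries the small factor $1/\tau$—so the multistep operator tested against $u^{n+1}$ has no monotone (G-stable) telescoping structure to fall back on; one must instead extract all coercivity from the strong biharmonic term and from the $1/\tau$ smallness of the feedback, and verify that together they dominate uniformly across the entire range $\tau\in(\tau_0,\infty)$, including $\tau$ just above the threshold where the contraction rate $1-\theta$ degenerates. Making this domination quantitative—i.e. propagating the $O(\sqrt\tau)$ increment control and exhibiting an explicit $\tau$-uniform (though possibly $\nu$-dependent) constant—is the technical heart of the proof. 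Once it is in place, the reduction of the first paragraph upgrades the $\|\Delta u^n\|_2$ bound to $\sup_{n\ge0}(\|u^n\|_2+\|\Delta u^n\|_2)\le B_k$ with $B_k$ independent of $\tau$, as claimed.
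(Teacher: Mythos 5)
There is a genuine gap at exactly the point you flag as ``the technical heart'': the treatment of the multistep feedback $\tfrac1\tau\sum_{i\ge1}a_i^{(k)}\delta u^{n+1-i}$. Your scheme extracts contraction only from the diagonal term $\tfrac{a_0^{(k)}}{\tau}u^n$ (giving the factor $\theta<1$) and hopes to absorb the remaining history terms by propagating, via induction, an increment bound $\sum_j\|\delta u^{n-j}\|_2^2\lesssim\alpha_k\tau$. But this induction does not close for $k\ge4$: estimating $\delta u^{n+1}$ from the elliptic identity gives, after crude triangle inequalities, $\|\delta u^{n+1}\|_2\le (\text{bounded terms})+\tfrac{1}{a_0^{(k)}}\sum_{i\ge1}|a_i^{(k)}|\,\|\delta u^{n+1-i}\|_2$, and one checks from Table \ref{tab1} that $\sum_{i\ge1}|a_i^{(k)}|>a_0^{(k)}$ for $k=4$ ($\tfrac{39}{12}>\tfrac{25}{12}$) and $k=5$ ($\tfrac{375}{60}>\tfrac{137}{60}$), so the increment recursion is expansive under any such norm-by-norm estimate. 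The stability of the homogeneous part of the recursion is not a triangle-inequality fact; it is a spectral fact about the full characteristic polynomial $\lambda^k+A_1^{(k)}s\lambda^{k-1}+\cdots+A_k^{(k)}s$ with $s=(A_0^{(k)}+\tau|j|^4)^{-1}\in(0,1/A_0^{(k)})$, and it is false for BDF7 --- so no argument that never sees the precise coefficient structure can succeed.

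The paper closes exactly this gap by passing to Fourier modes, writing $Z^{n+1}(j)=M(j)Z^n(j)+F^{n+1}(j)$ with $M(j)$ the $k\times k$ companion matrix, and proving (Lemma \ref{le3.1}) that all roots of the characteristic polynomial lie in a disk of radius $\lambda_a<1$ uniformly in $s\le s_0$ --- via discriminant case analysis for $k=3,4$ and a meromorphic positive-real-part argument for $k=5$ --- then upgrading spectral radius $<1$ to a uniform power contraction $\|M(s)^{n_0}\|\le\epsilon_0<1$ by a compactness/covering argument (Lemma \ref{le3.2}). The forcing is handled as you propose (boundedness of $f$ plus $\tau|j|\widehat T(j)\le1$), and the $H^2$ bound is then read off from the equation. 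Your reductions in the first two paragraphs (mean-zero, $|f|\le\tfrac12$, restriction to $\tau>\tau_0$, elliptic inversion) all match the paper, but without the root-location lemma the geometric decay you invoke to sum the bounded perturbations over $n$ is unproved, and the alternative Lyapunov-functional route you mention would require an analogous algebraic input that is not supplied.
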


In order to prove Theorem \ref{th3.1}, we consider the following scheme (see the paragraph preceding \eqref{eq:genBDFk} for the definition of $A_i^{(k)}$):
\begin{equation}
\label{3.model}
\frac{1}{\tau}\sum\limits_{i=0}^kA_i^{(k)}u^{n+1-i}=-\Delta^2u^{n+1}+\nabla\cdot f^n,\quad n \ge k-1.
\end{equation}
Here, $f^n$ denotes some approximation of $f(u(t_{n+1}))$ such as the extrapolation term. 
We have the following uniform boundedness result.

\begin{thm}[$H^2$-bound]\label{thm:H2bound}
Consider the scheme \eqref{3.model} with $\tau \ge\tau_0>0$. Assume that $u^0,\cdots,u^{k-1}\in H^2(\mathbb T^2)$ and have mean zero. Suppose that for some $\gamma_0>0$,
\begin{equation}
\sup_{n \ge k}\|f^n\|_2 \le \gamma_0<+\infty.
\end{equation}	
We have 
\begin{equation}
\sup_{n \ge k}\left(\|u^n\|_2+\|\Delta u^n\|_2\right) \le \gamma_1<+\infty,
\end{equation}
where $\gamma_1>0$ depends only on $(\tau_0,\gamma_0,u^0,\cdots,u^{k-1}).$ 	
\end{thm}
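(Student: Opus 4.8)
The plan is to diagonalize the linear part by passing to Fourier series on $\mathbb{T}^2=[0,1]^2$ and to treat each mode as a scalar $k$-step recurrence. First I would record that the scheme preserves the zero mean: integrating \eqref{3.model} over $\mathbb{T}^2$ annihilates both $\Delta^2 u^{n+1}$ and $\nabla\cdot f^{\,n}$, so $\sum_{i=0}^k A_i^{(k)}\int u^{n+1-i}\,dx=0$, and since $A_0^{(k)}\neq 0$ the mean-zero hypothesis on $u^0,\dots,u^{k-1}$ propagates to all $n$ by induction. Hence only frequencies $\xi\neq 0$, for which $|\xi|\ge 2\pi$, occur. Writing $\hat u^n(\xi)$ for the Fourier coefficients and $\lambda=|\xi|^4>0$, each mode obeys
\begin{equation}
\bigl(A_0^{(k)}+\tau\lambda\bigr)\hat u^{n+1}+\sum_{i=1}^k A_i^{(k)}\hat u^{n+1-i}=\tau\,\widehat{\nabla\cdot f^{\,n}}(\xi),\qquad \bigl|\widehat{\nabla\cdot f^{\,n}}(\xi)\bigr|\lesssim |\xi|\,|\hat f^{\,n}(\xi)|.
\end{equation}
Setting $\mu=\tau\lambda$, the associated characteristic polynomial is $P_\mu(w)=\rho_k(w)+\mu w^k$, where $\rho_k(w)=\sum_{i=0}^k A_i^{(k)}w^{k-i}$ is the BDF$k$ generating polynomial.

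The heart of the argument is a root condition: for every $\mu>0$ all roots of $P_\mu$ lie strictly inside the open unit disk. This is precisely the statement that the point $-\mu$ on the negative real axis lies in the open absolute-stability region of BDF$k$; it holds for $2\le k\le 6$ because these methods are zero-stable and stiffly stable, the negative real axis lying inside their A($\alpha$)-stability sector. I would either invoke this classical fact or, to keep the paper self-contained, verify it by applying the Schur--Cohn criterion to the explicit polynomials $P_\mu$ for each $k$. This is where the restriction $k\le 6$ and the precise BDF coefficients of Table \ref{tab1} are genuinely used, and it is the main obstacle.

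The decisive quantitative input is that $\mu$ cannot be small: since $\tau\ge\tau_0$ and $|\xi|\ge 2\pi$ we have $\mu=\tau|\xi|^4\ge \mu_0:=\tau_0(2\pi)^4>0$. Regarding the companion matrix $M_\mu$ of $P_\mu$ as a continuous family over the compact parameter set $\mu\in[\mu_0,\infty]$ (with $M_\infty$ having all eigenvalues at $0$), the root condition together with a uniform resolvent bound $\sup_{|w|=\rho_*}\sup_{\mu\ge\mu_0}\|(wI-M_\mu)^{-1}\|<\infty$ for a suitable $\rho_*\in(0,1)$ yields a uniform power bound $\|M_\mu^m\|\le C_*\rho_*^{\,m}$, with $C_*,\rho_*$ depending only on $\tau_0$. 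The resolvent bound, rather than mere spectral radius control, is needed because the companion matrices are non-normal. Thus the per-mode decay is uniform in both the frequency and the (arbitrarily large) time step.

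Finally I would close the estimate by discrete variation of constants. Writing each solution mode as a homogeneous part driven by the starting vector $(\hat u^{k-1},\dots,\hat u^0)$ plus a discrete Duhamel convolution of $M_\mu^m$ against the forcing, the homogeneous contribution is $O(\rho_*^{\,n})$ and, after summation in $\xi$, is controlled by $\sum_{i<k}(\|u^i\|_2+\|\Delta u^i\|_2)$. For the forcing contribution the decisive smoothing bound is $\frac{\tau}{A_0^{(k)}+\mu}\,|\xi|\le \frac{\tau}{\mu}\,|\xi|=\frac{1}{|\xi|^{3}}$. I would insert this into the Duhamel sum, apply Cauchy--Schwarz in the time convolution (which gains a factor $(1-\rho_*)^{-1}$ and reduces matters to $\sum_m\rho_*^m\|f^{\,n-1-m}\|_2^2$), and then use Plancherel mode by mode: squaring the symbol gives $|\xi|^{-6}$, which against the $H^2$ weight $|\xi|^4$ leaves $|\xi|^{-2}\le(2\pi)^{-2}$. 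Hence $\sup_n\|f^n\|_2\le\gamma_0$ produces a bound on $\sup_n(\|u^n\|_2+\|\Delta u^n\|_2)$ that is independent of $\tau$, the constant $\gamma_1$ depending only on $\tau_0$ (through $\mu_0,\rho_*,C_*$), on $\gamma_0$, and on the $H^2$ norms of $u^0,\dots,u^{k-1}$.
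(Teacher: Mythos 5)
Your proposal follows essentially the same route as the paper's proof: Fourier diagonalization onto nonzero modes, a per-mode companion-matrix recursion solved by discrete Duhamel, a root condition for the shifted characteristic polynomial, and a compactness argument (your resolvent bound over $\mu\in[\mu_0,\infty]$ plays the role of the paper's covering argument in Lemma \ref{le3.2}) to upgrade the spectral-radius bound to a uniform power bound $\|M^m\|\le C_*\rho_*^m$. The only substantive difference is that the step you correctly flag as the main obstacle --- showing all roots of $P_\mu$ lie strictly inside the unit disk for $\mu>0$ --- is exactly where the paper invests its effort (Lemma \ref{le3.1}, via discriminants for $k=3,4$ and a meromorphic-function maximum-principle argument for $k=5$), whereas you defer it to the classical fact that the open negative real axis lies in the interior of the BDF$k$ stability region for $k\le 6$; that citation is legitimate, so your plan is sound, but it leaves unexecuted the one part of the argument that is not routine.
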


\begin{proof}
We rewrite equation \eqref{3.model} as 
\begin{equation}
\label{3.linear}
u^{n+1}=-\sum_{i=1}^k\frac{A_i^{(k)}}{A_0^{(k)}+\tau\Delta^2}u^{n+1-i}+\tau\frac{1}{A_0^{(k)}+\tau\Delta^2}\nabla\cdot f^n.
\end{equation}
Since we are working with mean-zero functions,  \eqref{3.linear} can be recast as
\begin{equation}
\label{3.fourier}
u^{n+1}=-\sum_{i=1}^kA_i^{(k)}T u^{n+1-i}+\tau T\nabla\cdot f^n,
\end{equation}
where $T$ is a Fourier multiplier defined by
$$\widehat T(j)=
\frac{1}{A_0^{(k)}+\tau|j|^4}1_{|j| \ge1}.$$
It is not difficult to verify that
\begin{equation*}
0<\frac{1}{A_0^{(k)}+\tau|j|^4} \le\frac{1}{A_0^{(k)}+\tau_0},\quad
\tau|j|\frac{1}{A_0^{(k)}+\tau|j|^4} \le 1,\qquad \forall~ 0\neq j\in\mathbb{Z}^2.
\end{equation*}
Consequently,
\begin{equation}
\label{3.linear-j}
0<\widehat T(j) \le\frac{1}{A_0^{(k)}+\tau} \le\frac{1}{A_0^{(k)}+\tau_0},
\quad \tau|j||\widehat T(j)| \le 1,\qquad \forall ~0\neq j\in\mathbb{Z}^2.
\end{equation}

We set (below $\mathfrak{i}=\sqrt{-1}$)
\begin{equation}
\begin{aligned}
Z^{n+1}(j)=&\left(\widehat{u^{n+1}}(j),\widehat{u^{n}}(j)\cdots,\widehat{u^{n-k+1}}(j)\right)^{\mathrm T},\\
F^{n+1}(j)=&\left(\mathfrak{i}\tau \widehat T(j)j\wi{f^n}(j),0,\cdots,0\right)^{\mathrm T},
\end{aligned}
\end{equation}
and
\begin{equation}
M(j)=\left(
\begin{matrix}
-A_1^{(k)}\wi T(j)  &  -A_2^{(k)}\wi T(j) &\cdots &-A_{k-1}^{(k)}\wi T(j)&-A_{k}^{(k)}\wi T(j)\\
1  & 0 &\cdots &0&0\\
0  & 1 &\cdots &0&0\\
\vdots &\vdots &\ddots &\vdots&\vdots\\
0&0&\cdots&1&0
\end{matrix}
\right).		
\end{equation}
Clearly, \eqref{3.fourier} gives 
\begin{equation}
\begin{aligned}
Z^{n+1}(j)& = M(j) Z^n(j) + F^{n+1}(j)\\
& = (M(j))^{n-k+2}Z^{k-1}(j)+\sum_{\ell=k}^{n+1}(M(j))^{n+1-\ell}F^\ell(j),\quad \forall n \ge k.
\end{aligned}
\end{equation}
Now for each fixed $j$, by Lemma \ref{le3.2}, we have
\begin{equation}
\label{3.1-est}
|(M(j))^{n-k+2}Z^{k-1}(j)| \le |Z^{k-1}(j)|,\quad
|(M(j))^{n+1-\ell}F^\ell(j)| \le K_1\rho_1^{n+1-\ell}|F^\ell(j)|,
\end{equation}
where $K_1>0$ depends only on $\tau_0,$ and $0<\rho_1<1$ depends only on $\tau_0$. From \eqref{3.1-est} we obtain that
\begin{equation}
\sup_{n \ge k}\sup_{j\in\mathbb Z^2\setminus\{(0,0)\}}
|Z^{n+1}(j)| \le C_1,
\end{equation}
where $C_1$ depends only on $(u^0,\cdots,u^{k-1},\tau_0,\gamma_0)$. Using \eqref{3.fourier} we get
\begin{equation}
\left||j|^4(\wi{u^{n+1}}(j))^2\right| \le\frac{C_2}{|j|^4}+|\wi{f^n}(j)|^2,\quad \forall ~n \ge k, 
\end{equation}
where $C_2$ depends only on $(u^0,\cdots,u^{k-1},\tau_0,\gamma_0)$. The desired $H^2$-bound then follows easily.
\end{proof}

\begin{lem}
\label{le3.1}
Let $2 \le k \le 5$ and $0<s_0<\frac{1}{A_0^{(k)}}$. For $0<s \le s_0$ the roots $\lambda_i(s),~i=1,\cdots,k$, to the equation in $\lambda$
\begin{equation}\label{eq:poly}
\lambda^k+A_1^{(k)}s\lambda^{k-1}+\cdots+A_k^{(k)}s=0	
\end{equation}
satisfy
\begin{equation}
\label{e-est}
\max_i|\lambda_i(s)| \le\lambda_a<1,
\end{equation}
where $\lambda_a>0$ depends only on $s_0.$
\end{lem}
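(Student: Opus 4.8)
The plan is to reinterpret \eqref{eq:poly} as the characteristic equation of BDF$k$ applied to a scalar linear test problem, to locate the relevant spectral parameter on the negative real axis, and then to pass from a pointwise root bound to a uniform one by compactness. To begin I would rewrite \eqref{eq:poly} using the BDF$k$ generating polynomial $\rho(w)=\sum_{i=0}^{k}A_i^{(k)}w^{k-i}$. Since $\sum_{i=1}^{k}A_i^{(k)}\lambda^{k-i}=\rho(\lambda)-A_0^{(k)}\lambda^{k}$, the left-hand side of \eqref{eq:poly} is
\[
P_s(\lambda)=\bigl(1-sA_0^{(k)}\bigr)\lambda^{k}+s\,\rho(\lambda).
\]
Because $P_s(0)=sA_k^{(k)}\neq 0$ for $s>0$, no root vanishes, so for $s\in(0,s_0]$ equation \eqref{eq:poly} is equivalent to
\[
R(\lambda):=\frac{\rho(\lambda)}{\lambda^{k}}=A_0^{(k)}-\frac1s=:z .
\]
Using $s_0<1/A_0^{(k)}$, as $s$ runs over $(0,s_0]$ the value $z$ runs over the half-line $(-\infty,z_0]$ with $z_0=A_0^{(k)}-1/s_0<0$; the strict inequality in the hypothesis is precisely what keeps $z$ bounded away from the origin.

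Next I would observe that $\rho(\lambda)=z\,\lambda^{k}$ is exactly the characteristic equation produced by applying BDF$k$ to the test equation $y'=\mu y$ with $\tau\mu=z$. Hence the desired conclusion $|\lambda_i(s)|<1$ is equivalent to saying that the negative real number $z$ lies in the open region of absolute stability of BDF$k$. For $2\le k\le 5$ the entire negative real axis belongs to this open region: this is A-stability for $k=2$, and for $k=3,4,5$ it follows from A($\alpha$)-stability with a strictly positive angle $\alpha$ (equivalently, Gear's stiff stability). Therefore every root of \eqref{eq:poly} satisfies $|\lambda_i(s)|<1$ for each fixed $s\in(0,s_0]$. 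For a self-contained alternative one verifies, for each $k\in\{2,3,4,5\}$ separately, that the boundary locus $\{R(w):|w|=1\}$ never meets the negative real axis, together with the fact that all $k$ roots collapse to $0$ as $s\to 0^{+}$ (that is, as $z\to-\infty$); since the number of roots inside the open unit disk is invariant on each connected component of the complement of the boundary locus, all $k$ roots stay inside for every $z<0$.

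I would then extract the uniform constant by compactness. The roots of \eqref{eq:poly}, regarded as a multiset, depend continuously on $s$ through the coefficients of $P_s$, so $g(s):=\max_i|\lambda_i(s)|$ is continuous on the compact interval $[0,s_0]$, with $g(0)=0$ because $P_0(\lambda)=\lambda^{k}$. By the previous step $g(s)<1$ at every $s\in[0,s_0]$, and a continuous function on a compact set attains its maximum, so $\lambda_a:=\max_{s\in[0,s_0]}g(s)<1$. This $\lambda_a$ depends only on $s_0$ and the fixed BDF$k$ coefficients, as claimed.

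The main obstacle is the strict interiority asserted above: one must exclude roots on or outside the unit circle for \emph{every} negative real $z$, not merely for $z$ in a bounded window. For $k=2$ this is immediate, but for $k=3,4,5$ it genuinely rests on the positivity of the A($\alpha$)-stability angle, or, in the self-contained route, on checking that the boundary locus $\{R(w):|w|=1\}$ meets the real axis only at nonnegative values. It passes through the origin at $w=1$ since $R(1)=\sum_i A_i^{(k)}=0$ (consistency), but for $k\ge 3$ the locus may cross the real axis at additional points, and one must confirm that each such crossing occurs at a nonnegative value. Once this pointwise strict inequality is secured, the remaining compactness step is routine, and is exactly what upgrades it to the quantitative bound $\lambda_a<1$ depending only on $s_0$.
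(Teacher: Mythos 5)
Your argument is correct, but it follows a genuinely different route from the paper. You reduce the lemma to classical linear stability theory: after rewriting the characteristic polynomial as $R(\lambda):=\rho(\lambda)/\lambda^{k}=A_0^{(k)}-1/s=:z$ you observe that $z$ sweeps out a ray $(-\infty,z_0]$ with $z_0<0$, so the conclusion is exactly the statement that the open negative real axis lies in the interior of the absolute stability region of BDF$k$ --- i.e.\ A-stability for $k=2$ and A($\alpha$)-stability with $\alpha>0$ for $k=3,4,5$ --- and you then upgrade the pointwise bound $|\lambda_i(s)|<1$ to the uniform bound $\lambda_a<1$ by continuity of the root multiset on the compact interval $[0,s_0]$ (using $P_0(\lambda)=\lambda^k$). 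The paper instead proves everything from scratch, case by case: for $k=3,4$ it classifies the roots via the cubic and quartic discriminants and locates them by explicit sign and product-of-roots estimates, and for $k=5$ it shows $\Re\bigl(F_5(z,s)/(z^5-\tfrac{137}{60}sz^4)\bigr)>0$ on $|z|\ge 1$ via the maximum principle for harmonic functions. Your approach is shorter, more conceptual, handles the uniformity in $s_0$ more cleanly than the paper's case analysis does, and would extend verbatim to $k=6$; its cost is that the crux is either outsourced to a standard reference (Gear/Hairer--Wanner on A($\alpha$)-stability of BDF) or left as the unexecuted boundary-locus computation that $\{R(w):|w|=1\}$ meets the real axis only at nonnegative values --- a finite verification you correctly identify but do not carry out, and which is essentially the computational content the paper supplies. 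If a self-contained proof is required, you should perform that check for each $k\in\{3,4,5\}$; otherwise a precise citation suffices.
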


\begin{rem}
We first verify this lemma numerically. Figure \ref{fig:max_lambda} plots the $\max_i|\lambda_i(s)|$ w.r.t. $s$ for different BDF$k$ with $2 \le k  \le 7$. 
It can be seen that this lemma holds true for BDF$k$,~$2\le k\le6$, but not for BDF7. In fact, it is well-known that BDF$k$ is unstable when $k \ge 7$.
\begin{figure}
\includegraphics[trim={2in 0.5in 2in 0},clip,width = 1\textwidth]{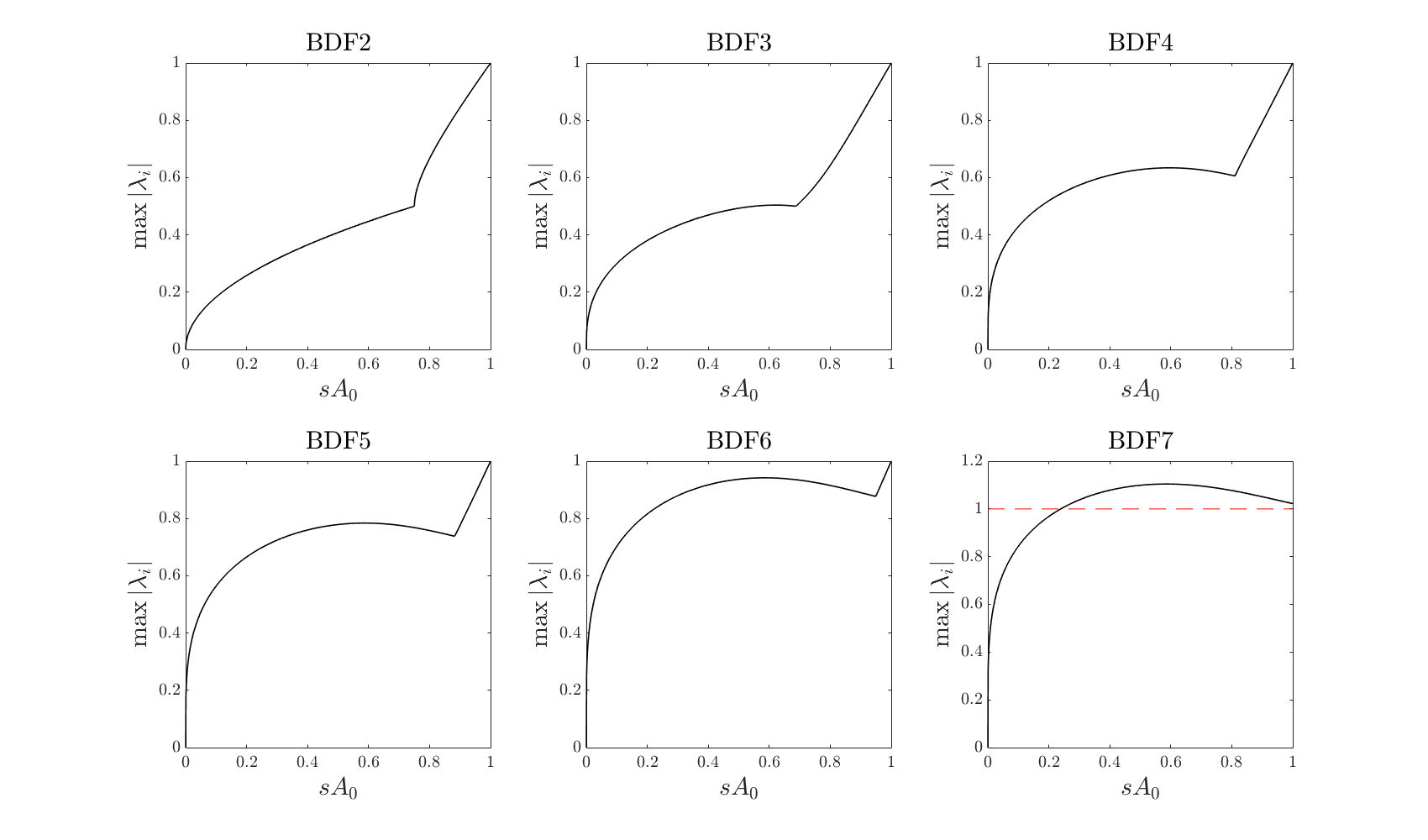}
\caption{$\max_i|\lambda_i(s)|$ w.r.t. $s$ for different BDF$k$, $2 \le k  \le 7$.}\label{fig:max_lambda}
\end{figure}
\end{rem}

\begin{proof}
In the BDF$2$ case, the proof is trivial and is omitted here. 
Next, we prove the cases of $k=3,4$ based on the discriminant of algebraic equation, while for the case of $k=5$, we prove it based on constructing a meromorphic function associated with the polynomial. 
\medskip

\noindent{\bf BDF$3$ case.} The algebraic equation \eqref{eq:poly} reads as
\begin{equation}
\label{bdf-3}
F_3(\lambda,s)=\lambda^3-3 s\lambda^2+\frac32s\lambda-\frac13s=0,\quad s\in\left(0,\frac{6}{11}\right).
\end{equation}
It is known that the discriminant of a cubic polynomial equation
$ax^3+bx^2+cx+d=0$
is
\begin{equation*}
\triangle_3=b^2c^2-4ac^3-4b^3d-27a^2d^2+18abcd.
\end{equation*}
If $\triangle_3<0$, then the cubic polynomial equation admits one real root and one pair of non-real complex conjugate roots. By a simple computation, we obtain the discriminant for \eqref{bdf-3} as
\begin{equation}
\label{3.dis}
\triangle_3 = -3s^2+\frac{27}{2}s^3-\frac{63}{4}s^4<0\quad \mbox{for}\quad s\in\left(0,\frac{6}{11}\right).
\end{equation}
Therefore, \eqref{bdf-3} admits only one real root and two complex conjugate roots. Since 
$$\partial_s F_3(\lambda,s) = -3\lambda^2+\frac32\lambda-\frac13<0\quad \mbox{for}\quad \lambda\in\mathbb{R},$$
we have
$$F_3(1,s)>F_3\left(1,\frac{6}{11}\right) = 0 \quad \mbox{for}\quad s\in\left(0,\frac{6}{11}\right).
$$
On the other hand, 
$$F_3\left(\frac12s,s\right)= -s\left(\frac{5}{8}s^2-\frac{3}{4}
s+\frac13\right)<0\quad \mbox{for}\quad s\in\left(0,\frac{6}{11}\right).$$
Then, the only real root of \eqref{bdf-3} satisfies
\begin{equation}
\lambda_{1}(s)\in\left(\frac12s,1\right)\quad \mbox{for}\quad s\in\left(0,\frac{6}{11}\right).
\end{equation}
Since $\lambda_1(s)\lambda_2(s)\lambda_3(s) = -\frac13 s$ and $\lambda_2(s) = \overline{\lambda_3(s)}$, we have
\begin{equation}
|\lambda_{2}(s)|=|\lambda_3(s)|<\sqrt{\frac23}<1\quad \mbox{for}\quad s\in\left(0,\frac{6}{11}\right).
\end{equation}
Hence, we proved the conclusion \eqref{e-est} for BDF$3.$
\medskip

\noindent {\bf BDF$4$ case.} The characteristic equation reads 
\begin{equation}
\label{bdf-4}
F_4(\lambda,s)=\lambda^4-4s\lambda^3+3s\lambda^2-\frac43s\lambda+\frac14s=0,~s\in\left(0,\frac{12}{25}\right).
\end{equation}
The discriminant for quartic polynomial equation
$$ax^4+bx^3+cx^2+dx+e=0$$
is
\begin{equation*}
\begin{aligned}
\triangle_4=~&256a^3e^3-192a^2bde^2-128a^2c^2e^2+144a^2cd^2e-27a^2d^4\\
&+144ab^2ce^2-6ab^2d^2e
-80abc^2de+18abcd^3+16ac^4e-4ac^3d^2\\
&-27b^4e^2+18b^3cde-4b^3d^3-4b^2c^3e+b^2c^2d^2.
\end{aligned}
\end{equation*}
If $\triangle_4<0$, then quartic polynomial equation has two distinct real roots and two complex conjugate non-real roots. While if $\triangle_4>0$, $P=8ac-3b^2>0$, the quartic equation has two pairs of non-real complex conjugate roots. By tedious computation, we see that the discriminant for \eqref{bdf-4} is
\begin{equation}
\label{b4.dis}
\triangle_4=4s^3-\frac{88}{3}s^4+\frac{220}{3}s^5-\frac{1696}{27}s^6
\begin{cases}
>0\quad \mbox{for}\quad s\in\left(0,\frac38\right),\\
\\
<0\quad  \mbox{for}\quad s\in\left(\frac38,\frac{12}{25}\right),
\end{cases}
\end{equation}
and $P=24 s- 48 s^2>0$ for $x\in\left(0,\frac38\right).$ Hence, equation \eqref{bdf-4} admits two real roots and one pair of complex conjugate roots when $s\in\left(\frac38,\frac{12}{25}\right)$ and two pairs of complex conjugate roots when $s\in\left(0,\frac38\right)$. Note that for $s=\frac38$, the equation \eqref{bdf-4} possesses repeated real roots and a pair of complex conjugate roots.

\noindent Case 1. $s\in\left[\frac38,\frac{12}{25}\right)$. When $s=\frac{12}{25}$, equation \eqref{bdf-4} admits two real roots $1$ and $\lambda_*\approx 0.3814$. We notice that
$$\partial_sF_4(\lambda,s)=-4\lambda^3+3\lambda^2-\frac43\lambda+\frac14<0\quad \mbox{for}\quad \lambda\in\left(0.32,1\right).$$
It follows that
$$F_4(1,s)>F_4\left(1,\frac{12}{25}\right)\quad \mbox{for}\quad s\in\left(0,\frac{12}{25}\right),$$
and
$$F_4(\lambda_*,s)>F_4\left(\lambda_*,\frac{12}{25}\right)\quad \mbox{for}\quad s\in\left(0,\frac{12}{25}\right).$$
This implies for any $s\in\left[\frac38,\frac{12}{25}\right)$, the two real roots $\lambda_1(s),\lambda_2(s)$ are locked in $(\lambda_*,1)$ (see Figure \ref{fig:F} for a schematic diagram).  On the other hand the pair of complex conjugate roots satisfy
\begin{equation*}
|\lambda_3(s)|=|\lambda_4(s)|<\sqrt{\frac{s}{4}\cdot\frac{1}{\lambda_*^2}}\le 
\sqrt{\frac{3}{25}\cdot\frac{1}{0.3813^2}}
<0.92\quad \mathrm{for}\quad s\in\left[\frac38,\frac{12}{25}\right).
\end{equation*}
Hence, in this case, we proved that $\max_i|\lambda_i|<1.$
\begin{figure}
\includegraphics[trim={0in 0.0in 0in 0},clip,width = 0.54\textwidth]{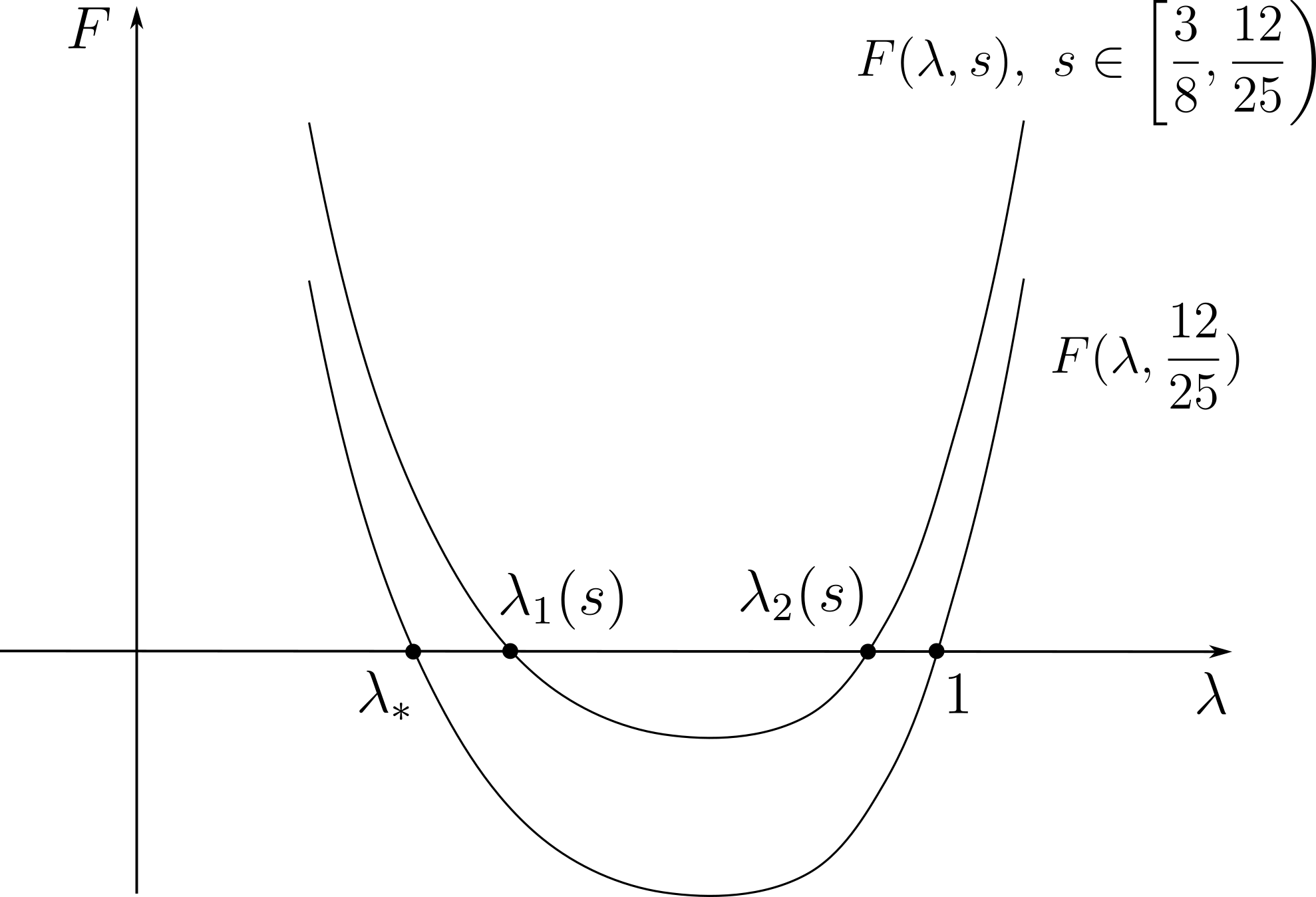}
\caption{Schematic diagram of $F(\lambda,s)$ in BDF$4$ case.}\label{fig:F}
\end{figure}

\noindent Case 2. $s\in\left(0,\frac38\right)$. In this case the equation \eqref{bdf-4} admits two pairs of complex conjugate roots. Denoted them by $c_1, \bar{c}_1, c_2, \bar{c}_2$. Clearly, 
\begin{equation}
\label{sys}
\begin{cases}
\Re(c_1)+\Re(c_2)=2s,\\ |c_1|^2+|c_2|^2+4\Re(c_1)\Re(c_2)=3s,\\
|c_1|^2\Re(c_2)+|c_2|^2\Re(c_1)=\frac23s,\\
|c_1|^2|c_2|^2=\frac14s.
\end{cases}
\end{equation}
 Without loss of generality, we may assume from the last equation in \eqref{sys} that
\begin{equation}
\label{b4.assume}
|c_1|^2 \ge1>\frac14s \ge|c_2|^2.
\end{equation}
Together with the first and third equation in \eqref{sys}, we get
\begin{equation}\label{3.26}
\Re(c_1) = \frac{s(2|c_1|^2-\frac23)}{|c_1|^2-|c_2|^2}>0,\quad 
\Re(c_2) = \frac{s(-2|c_2|^2+\frac23)}{|c_1|^2-|c_2|^2}>0.
\end{equation}
Consequently,
\begin{equation}\label{3.27}
|c_1|^2-|c_2|^2=3s-4\Re(c_1)\Re(c_2)-2|c_2|^2<3s.
\end{equation}
Substituting \eqref{3.26} into the second equation in \eqref{sys}, we have
\begin{equation}
\begin{aligned}
3s& =|c_1|^2 + |c_2|^2+  \frac{4s^2(2|c_1|^2-\frac23)(-2|c_2|^2+\frac23)}{(|c_1|^2-|c_2|^2)^2} \\
& \stackrel{\text{by}~\eqref{3.27}}{\geq} |c_1|^2+|c_2|^2+\frac{4s^2(\frac43(|c_1|^2+|c_2|^2)-4|c_1|^2|c_2|^2-\frac49)}{(3s)^2}\\
& \stackrel{\text{by}~ |c_1|^2|c_2|^2 = \frac14 s}{=}|c_1|^2+|c_2|^2+\frac49\left(\frac43(|c_1|^2+|c_2|^2)-s-\frac49\right)\\
&\stackrel{\text{by}~|c_1|^2+|c_2|^2 \ge 1 }\geq 1+\frac{16}{27}-\frac{16}{81}-\frac49s = \frac{113}{81}-\frac49s,
\end{aligned}
\end{equation}
which implies that $s>\frac{113}{279}>\frac{3}{8}$. Thus we arrive at a contradiction. Therefore, we finish the proof for this case.

\medskip

\noindent {\bf BDF$5$ case.} The algebraic equation reads 
\begin{equation}
\label{bdf-5}
F_5(\lambda,s)=\lambda^5-5s\lambda^4+5s\lambda^3-\frac{10}{3}s\lambda^2+\frac54s\lambda-\frac15s,\quad s\in\left(0,\frac{60}{137}\right).
\end{equation}
For any $s\in\left(0,\frac{60}{137}\right),$ we consider the following meromorphic function $\mathcal{F}_s(z)$ defined in complex domain 
\begin{equation}
\label{bdf-5-1}
\mathcal{F}_s(z)=\dfrac{F_5(z,s)}{z^5-\frac{137}{60}sz^4}.
\end{equation}
It is easy to see that $z^5-\frac{137}{60}sz^4$ never vanishes in $|z| \ge1$ whenever $s\in\left(0,\frac{60}{137}\right).$ Therefore $\mathcal{F}_s(z)$ defines a holomorphic function for $|z| \ge1.$ 
It is easy to see that
\begin{equation}\label{eq:limFs}
\lim_{|z|\to\infty}\mathcal{F}_s(z)=1>0.
\end{equation}
Next, we prove that
\begin{equation}
\label{bdf-5-claim}
\Re(\mathcal{F}_s(z))>0\quad \mbox{for}\quad |z| \ge1~\mbox{and}~s\in\left(0,\frac{60}{137}\right),
\end{equation}
implying that $\mathcal{F}_s(z)$ will not vanish in $|z| \ge 1$.

Note that $\Re(\mathcal{F}_s(z))$ is harmonic when $|z|>1$. 
According to the maximum principle of harmonic function and \eqref{eq:limFs}, to prove \eqref{bdf-5-claim}, it is sufficient to show that 
\begin{equation}
\Re(\mathcal{F}_s(z))>0\quad \mbox{for}\quad |z|=1~\mbox{and}~s\in\left(0,\frac{60}{137}\right),
\end{equation}
which is equivalent to
\begin{equation}
\label{bdf-5-claim-1}
\Re\left(\left(1-5s\bar z+5s\bar z^2-\frac{10}{3}s\bar z^3+\frac54s\bar z^4-\frac15s\bar z^5\right)\left(1-\frac{137}{60}s z\right) \right)>0,
\end{equation}
for any $|z|=1$ and $s\in\left(0,\frac{60}{137}\right)$, where $\bar z$ denotes the conjugate of $z$. 

We write $\Re(z)$ as $x$ and $|x| \le1$, then by the trigonometric identities and tedious computations, we have
\begin{equation}
\label{bdf-5-p}
\begin{aligned}
\Re\left(\left(1-5s\bar z+5s\bar z^2-\frac{10}{3}s\bar z^3+\frac54s\bar z^4-\frac15s\bar z^5\right)\left(1-\frac{137}{60}s z\right)\right)=1+A(x)s+B(x)s^2,
\end{aligned}	
\end{equation}
where
\begin{align*}
A(x)=~&-\frac{15}{4} + \frac{103}{60} x - \frac{28}{3}x^3 + 10 x^4 - \frac{16}{5} x^5,\\
B(x)=~&\frac{959}{225} -\frac{137}{48} x + \frac{2603}{225} x^2 - \frac{137}{12} x^3 + \frac{274}{75} x^4.
\end{align*}
Regarding the right-hand side of \eqref{bdf-5-p} as a quadratic polynomial in $s$, we derive the discriminant 
\begin{equation}
\label{bdf-5-d}
\begin{aligned}
A^2(x)-4B(x)=&-\frac{10751}{3600} - \frac{35}{24} x - \frac{155983}{3600} x^2 + \frac{347}{3} x^3 - \frac{27373}{225}x^4
+\frac{175}{3}x^5\\
&+ \frac{17128}{225} x^6 - \frac{560 }{3}x^7 +\frac{2396}{15} x^8- 64 x^9 + \frac{256}{25} x^{10}.
\end{aligned}
\end{equation}
In Figure \ref{fig:AB} given by Matlab, we can see that the polynomial defined on the right-hand side of \eqref{bdf-5-d} admits two real roots in $\mathbb{R}$, they are $x_s\approx-0.908$ and $1$. 
The discriminant is negative when $ x_s<x< 1$. 
On the other hand, it can be shown that $B(x)>0$ for $-1\le x\le1$ and $A(x)>0$ for $-1 \le x<x_s$. As a consequence, $1+A(x)s+B(x)s^2>0$ for $-1 \le x<x_s,~s\in\left(0,\frac{60}{137}\right).$ 
While if $x=1$, the quadratic polynomial is
\begin{equation*}
1+A(x)s+B(x)s^2=\left(1-\frac{137}{60}s\right)^2>0\quad \mbox{for}\quad s\in\left(0,\frac{60}{137}\right).
\end{equation*}
Hence, we have shown that \eqref{bdf-5-claim-1} holds for all $|x| \le 1$ and the claim \eqref{bdf-5-claim} then holds. 
Therefore, $\mathcal{F}_s(z)$ never vanishes in $|z| \ge1$ for $s\in\left(0,\frac{60}{137}\right).$ 
It follows that
\begin{equation}
F_5(s,z)~\mbox{admits no roots in}~|z| \ge1\quad\mbox{for}\quad s\in\left(0,\frac{60}{137}\right).
\end{equation}
It is not difficult to check   that all roots of $F_5(s,z)=0$ are in the unit ball if $s$ is close to $0$. Thus, we proved the conclusion \eqref{e-est} for BDF5 case.

\begin{figure}
\includegraphics[trim={0in 0.in 0in 0},clip,width = 0.55\textwidth]{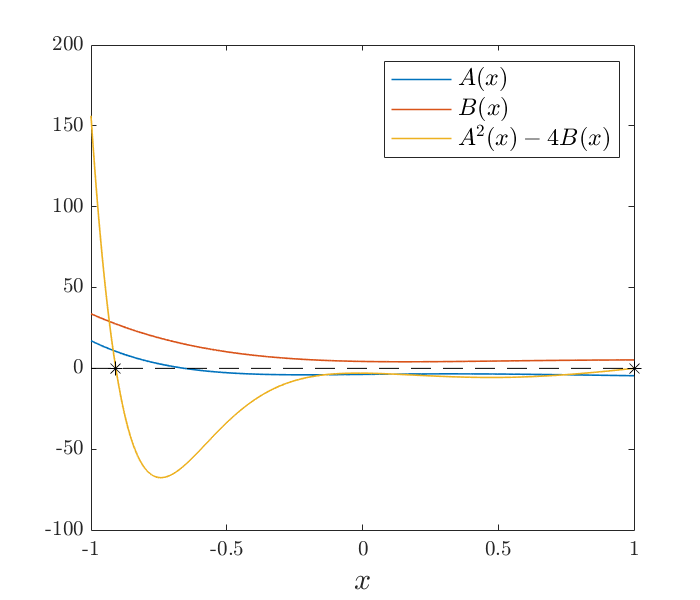}
\caption{$A(x)$, $B(x)$, and $A^2(x)-4B(x)$ w.r.t. $x$ for BDF$5$ case.}\label{fig:AB}
\end{figure}

\end{proof}

\begin{rem}
It is also possible to work out a proof for BDF$4$ by following
the meromorphic approach in the BDF$5$ case.
\end{rem}

\begin{lem}
\label{le3.2}	
Let $0<s_0<\frac{1}{A_0^{(k)}}$. Consider the matrix
\begin{equation}
M(s)=\left(
\begin{matrix}
-A_1^{(k)}s  &  -A_2^{(k)}s &\cdots &-A_{k-1}^{(k)}s&-A_{k}^{(k)}s\\
1  & 0 &\cdots &0&0\\
0  & 1 &\cdots &0&0\\
\vdots &\vdots &\ddots &\vdots&\vdots\\
0&0&\cdots&1&0
\end{matrix}
\right),	
\end{equation}
where $0<s \le s_0.$ There exists an integer $n_0 \ge1$ which depends on $s_0$ such that
\begin{equation}\label{ineq:eps0}
\sup_{0<s \le s_0}\sup_{x\in\mathbb{R}^k,|x|=1}
|M(s)^{n_0}x| \le\epsilon_0<1,
\end{equation}
where $\epsilon_0>0$ depends only on $s_0$ and $|x|=\sqrt{\sum_{i=1}^k|x_i|^2}$ denotes the usual $\ell^2$-norm on $\mathbb{R}^k.$
\end{lem}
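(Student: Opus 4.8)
The plan is to reduce the spectral decay of the companion matrix $M(s)$ to the root bound already established in Lemma \ref{le3.1}. The key observation is that $M(s)$ is precisely the companion matrix of the polynomial
\begin{equation}\label{eq:companion-poly}
\lambda^k + A_1^{(k)} s \lambda^{k-1} + \cdots + A_k^{(k)} s,
\end{equation}
so its eigenvalues are exactly the roots $\lambda_1(s), \ldots, \lambda_k(s)$ of \eqref{eq:poly}. By Lemma \ref{le3.1}, these satisfy $\max_i |\lambda_i(s)| \le \lambda_a < 1$ uniformly for $0 < s \le s_0$, where $\lambda_a$ depends only on $s_0$. The spectral radius of $M(s)$ is therefore bounded by $\lambda_a$ for every such $s$, which already guarantees $M(s)^n \to 0$ for each fixed $s$; the work lies in upgrading this to the \emph{uniform} decay \eqref{ineq:eps0}.

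First I would fix an auxiliary radius $\rho_a \in (\lambda_a, 1)$, so that every eigenvalue of every $M(s)$ lies strictly inside the disk $\{|z| < \rho_a\}$. For a single matrix, Gelfand's formula gives an $n$ with $\|M(s)^n\| < \rho_a^n < 1$; the issue is that the required $n$ could a priori blow up as $s$ varies. To obtain a uniform bound, I would use a resolvent (Cauchy-integral) representation of $M(s)^n$. Writing
\begin{equation}\label{eq:cauchy-rep}
M(s)^n = \frac{1}{2\pi \mathfrak{i}} \oint_{|z| = \rho_a} z^n \bigl(z I - M(s)\bigr)^{-1} \, dz,
\end{equation}
the resolvent $(zI - M(s))^{-1}$ is a rational function whose entries are explicit in $z$ and $s$ (its denominator is the characteristic polynomial \eqref{eq:companion-poly}). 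On the compact set $\{|z| = \rho_a\} \times [s_\ast, s_0]$ (for any $s_\ast > 0$) this resolvent is continuous and hence bounded by a constant $C(s_0)$, since its denominator stays away from zero there. This yields $|M(s)^n x| \le C(s_0) \rho_a^n$ uniformly in $s$ and in unit vectors $x$, and choosing $n_0$ large enough that $C(s_0)\rho_a^{n_0} < 1$ gives \eqref{ineq:eps0} with $\epsilon_0 := C(s_0)\rho_a^{n_0}$.

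The main obstacle is the uniformity of the resolvent bound as $s \downarrow 0$, since the interval $(0, s_0]$ is not compact at the left endpoint. I would handle the small-$s$ regime separately: as $s \to 0$, the polynomial \eqref{eq:companion-poly} degenerates so that $k-1$ roots collapse toward the origin while one root behaves like a root of $\lambda^k = O(s)$, so in fact all roots tend to $0$ and the spectral radius of $M(s)$ tends to $0$. Thus for $0 < s \le s_\ast$ with $s_\ast$ small, the eigenvalues lie in a disk of radius much smaller than $\rho_a$, and the same Cauchy-integral argument applies with a denominator bounded below uniformly; alternatively one can simply note $\|M(s)\|$ stays bounded and its spectral radius is uniformly small, so a fixed $n_0$ works there too. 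Combining the two regimes and taking the larger of the two candidate exponents $n_0$ completes the argument. The remaining verifications—the explicit resolvent computation and the continuity/compactness bookkeeping—are routine and I would not grind through them in detail.
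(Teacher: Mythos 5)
Your argument is correct, and it rests on the same two pillars as the paper's proof---the uniform eigenvalue bound of Lemma \ref{le3.1} applied to the companion matrix $M(s)$, plus a separate treatment of the degenerate regime $s\to 0$---but the mechanism you use to extract a single uniform exponent $n_0$ on the compact range of $s$ is genuinely different. The paper argues pointwise: for each fixed $s_*$ the spectral radius bound gives some exponent $n_*$ with $\sup_{|x|=1}|M(s_*)^{n_*}x|<1$, continuity extends this to a neighborhood $J_*$, and a finite subcover of $[s_1,s_0]$ is then patched together (the reconciliation of the different exponents $n_*$ into one $n_0$ is left implicit). You instead invoke the holomorphic functional calculus: representing $M(s)^n$ as a Cauchy integral of $z^n\bigl(zI-M(s)\bigr)^{-1}$ over the circle $|z|=\rho_a$ with $\lambda_a<\rho_a<1$, and bounding the resolvent on a compact set where its denominator (the characteristic polynomial) cannot vanish, you obtain the explicit uniform estimate $\|M(s)^{n}\|\le C(s_0)\rho_a^{n+1}$ in one stroke, which makes the choice of $n_0$ transparent and dispenses with the covering argument. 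For the small-$s$ end the paper uses the neat algebraic identity $M(s)^k=sM_k(s)$, whereas you observe that the spectral radius tends to $0$; in fact your case split there is not even necessary, since at $s=0$ the characteristic polynomial is $z^k$, which is bounded away from zero on $|z|=\rho_a$, so the resolvent bound holds uniformly on all of $[0,s_0]$. Both routes are sound; yours buys an explicit geometric decay rate and a cleaner uniformity argument at the cost of a small amount of complex analysis, while the paper's is more elementary but less quantitative.
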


\begin{proof}
First we notice that
\begin{equation}
M(s)^k=sM_k(s),
\end{equation}
where all entries of $M_k(s)$ are either constants or polynomials of $s$. Therefore, if $s_1$ is sufficiently small, then we have 
\begin{equation}
\sup_{0<s \le s_1}\sup_{x\in\mathbb{R}^k,|x|=1}|M(s)^kx| \le\frac12.
\end{equation}
We now focus on the regime $s_1 \le s \le s_0<\frac{1}{A_0}$. Consider a fixed $s_*\in[s_1,s_0]$. By the above lemma, there exists $n_*$ depending on $s_*$ such that
\begin{equation*}
\sup_{x\in\mathbb R^k,~|x|=1}|M(s_*)^{n_*}x| \le\epsilon_*<1,
\end{equation*}
where $\epsilon_*$ also depends on $s_*$. Perturbing around $s_*$ we can find a small neighborhood $J_*$ around $s_*$ such that 
\begin{equation*}
\sup_{x\in\mathbb R^k,~|x|=1}|M(s)^{n_*}x| \le\epsilon_1<1,
\quad \forall s\in J_*,
\end{equation*}
where $\epsilon_1$ depends only on $s_*.$
By a covering argument, we conclude that there exist $n_0$ and $\epsilon_0$ such that \eqref{ineq:eps0} is satisfied. 
\end{proof}

\begin{rem}
The convergence analysis of the BDF$k$/EP$k$ scheme can be done similarly as in  the BDF$3$ case 
(cf. \cite{li2021bdf3}). 
\end{rem}

%

 \vspace{1cm}
{\bf Acknowledgement.}
The research of W. Yang is supported by NSFC Grants 11801550, 11871470, and 12171456.
The work of C. Quan is supported by NSFC Grant 11901281, the Guangdong Basic and Applied Basic Research Foundation (2020A1515010336), and the Stable Support Plan Program of Shenzhen Natural Science Fund (Program Contract No. 20200925160747003).

\end{document}